\newtheorem{thm}{Theorem}[section]
\newtheorem{prop}[thm]{Proposition}
\newtheorem{lem}[thm]{Lemma}
\newtheorem{cor}[thm]{Corollary}
\newtheorem{rem}[thm]{Remark}
\newtheorem{rems}[thm]{Remarks}
\newtheorem{defi}[thm]{Definition}
\newtheorem{exo}{\bf\large Exercice}
\newcommand{\R}{\mathbb{R}}
\newcommand{\N}{\mathbb{N}}
\newcommand{\C}{\mathbb{C}}
\newcommand{\beq}{\begin{eqnarray}}
\newcommand{\eeq}{\end{eqnarray}}
\newcommand{\bq}{\begin{equation}}
\newcommand{\eq}{\end{equation}}
\newcommand{\beqn}{\begin{eqnarray*}}
\newcommand{\eeqn}{\end{eqnarray*}}
\newcommand{\bex}{\begin{exo}}
\newcommand{\eex}{\end{exo}}
\newcommand{\ben}{\begin{enumerate}}
\newcommand{\een}{\end{enumerate}}
\newcommand{\Int}{\displaystyle \int}
\newcommand{\Frac}{\displaystyle \frac}
\newcommand{\Sup}{\displaystyle \sup}
\newcommand{\Lim}{\displaystyle \lim}
\newcommand{\Liminf}{\displaystyle \liminf}
\newcommand{\Limsup}{\displaystyle \limsup}
\newcommand{\Max}{\displaystyle \max}
\author{Ines Ben Ayed}
\address{Universit\'e de Tunis El Manar,
Facult\'e des Sciences de Tunis, D\'epartement de Math\'ematiquess, 2092, Tunis, Tunisie}
\email{\sl abenyed08@gmail.com}
\author{Mohamed Khalil Zghal}
\address{Universit\'e de Tunis El Manar,
Facult\'e des Sciences de Tunis, D\'epartement de Math\'ematiquess, 2092, Tunis, Tunisie}
\email{\sl zghal-khalil@hotmail.fr}
\thanks{{\sf I. Ben Ayed \& M.- K. Zghal} are grateful to the Laboratory of
PDE and Applications at the Faculty of Sciences of Tunis}
\title[Characterization of the lack of...]
{Characterization of the lack of compactness of $H^2_{rad}(\R^4)$ into the Orlicz space}
\begin{document}
\begin{abstract}
This paper is devoted to the description of the lack of
compactness of the Sobolev space $H^2_{rad}(\R^4)$ in the Orlicz space $\mathcal{L}(\R^4)$. The approach that we adopt to establish this characterization is in the spirit of the one adopted in
the case of $H^1_{rad}(\R^2)$ into the Orlicz space $\mathcal{L}(\R^2)$ in \cite{Bahouri}.
\end{abstract}
\maketitle \tableofcontents

\section{Introduction}
\subsection{Development in critical Sobolev embedding}
Due to the scaling invariance, the critical Sobolev embedding
\begin{equation}\label{inj}
    \dot{H}^s(\mathbb{R}^d)\hookrightarrow L^p(\mathbb{R}^d),
\end{equation}
 when $0\leq s< \frac{d}{2}$ and $\frac{1}{p}=\frac{1}{2}-\frac{s}{d}$, is not compact.\\
 After the pioneering works of P. Lions \cite{Lions1} and \cite{Lions2}, P. G\'{e}rard described in \cite{Gerard} the lack
 of compactness of \eqref{inj} by means of profiles in the following terms: a sequence $(u_n)_n$ bounded
 in $\dot{H}^s(\R^d)$ can be decomposed, up to a subsequence extraction, on a finite sum of orthogonal
 profiles such that the remainder converges to zero in $L^p(\R^d)$ as the number of the sum and $n$ tend to infinity.
 This question was later investigated by S. Jaffard in the more general case of $H^{s,q}(\mathbb{R}^d)\hookrightarrow L^p(\mathbb{R}^d)$,
 $0<s<\frac{d}{p}$ and $\frac{1}{p}=\frac{1}{q}-\frac{s}{d}$ by the use of nonlinear wavelet and recently in an abstract frame $X\hookrightarrow Y$ including Sobolev, Besov, Triebel-Lizorkin, Lorentz, H\"older and BMO spaces. (One can consult ~\cite{BCD} and the references therein for an introduction to these  spaces). In addition, in \cite{Bahouri1}, \cite{Bahouri2} and \cite{Bahouri} H. Bahouri, M. Majdoub and N. Masmoudi characterized the lack of compactness of $H^1(\mathbb{R}^2)$ in the Orlicz space (see Definition \ref{deforl})
 \begin{equation*}
    H^1(\mathbb{R}^2)\hookrightarrow \mathcal{L}(\R^2),
 \end{equation*}
in terms of orthogonal profiles generalizing the example by Moser:
$$ g_n(x):=\sqrt{\frac{\alpha_n}{2\pi}}\;\psi\Big(\frac{-\log|x|}{\alpha_n}\Big), $$
where  $\underline{\alpha}:=(\alpha_n)$, called the scale, is a sequence of positive real numbers going to infinity and ~$\psi $, called the profile, belongs to the set
$$\Big\{\;\psi\in L^2(\R,{\rm e}^{-2s}ds);\;\;\; \psi'\in
L^2(\R),\;\psi_{|]-\infty,0]}=0\,\Big\}.
$$
The study of the lack of compactness of critical Sobolev embedding was at the origin of several works concerning the understanding of features of solutions of  nonlinear partial differential equations. Among others, one can mention \cite{BG}, \cite{km}, \cite{ker}, \cite{La} and \cite{Tao}.
\subsection{Critical 4D Sobolev embedding}
The Sobolev space $H^2(\mathbb{R}^4)$ is continuously embedded in
all Lebesgue spaces $L^p(\mathbb{R}^4)$ for all $2 \leq p < \infty.$
On the other hand, it is also known that $H^2(\mathbb{R}^4)$ embed
in $BMO(\mathbb{R}^4)\cap L^2(\mathbb{R}^4)$, where
~$BMO(\mathbb{R}^d)$ denotes the space of bounded mean oscillations
which is  the space of locally integrable functions $f$ such that
$$ \|f\|_{BMO}=\sup_{B}\frac 1 {|B|}\int_B |f-f_B|\;dx< \infty \quad \mbox{with}\quad f_B= \frac 1 {|B|}\int_B f \;dx.$$
The above supremum being taken over the set of Euclidean balls~$B$,~$|\cdot |$ denoting the Lebesgue measure.\\\\
In this paper, our goal is to investigate the lack of compactness of the Sobolev space $H^2_{rad}(\R^4)$ in the Orlicz space $\mathcal{L}(\R^4)$ defined as follows:
\begin{defi}\label{deforl} Let $\phi : \R^+\to\R^+$ be a convex increasing function such that
$$
\phi(0)=0=\lim_{s\to 0^+}\,\phi(s),\quad
\lim_{s\to\infty}\,\phi(s)=\infty.
$$
We say that a measurable function $u : \R^d\to\C$ belongs to
$L^\phi$ if there exists $\lambda>0$ such that
$$
\int_{\R^d}\,\phi\left(\frac{|u(x)|}{\lambda}\right)\;dx<\infty.
$$
We denote then
\begin{equation*}
\|u\|_{L^\phi}=\inf\,\left\{\,\lambda>0,\int_{\R^d}\,\phi\left(\frac{|u(x)|}{\lambda}\right)\;dx\leq
1\,\right\}.
\end{equation*}
\end{defi}
\noindent In what follows we shall fix  $d=4$, $\phi(s)={\rm e}^{ s^2}-1$ and
denote the Orlicz space $L^\phi$ by ${\mathcal L}$ endowed with the
norm $\|\cdot\|_{\mathcal L}$ where the number $1$ is replaced by
the constant $\kappa$ involved in \eqref{2}. It is easy to see that $\mathcal{L} \hookrightarrow L^p$ for every $2 \leq p <\infty$.\\
The 4D Sobolev embedding in Orlicz space $\mathcal{L}$ states as follows:
\begin{equation}\label{inje}
    \|u\|_{\mathcal{L}(\R^4)}\leq\frac{1}{\sqrt{32\pi^2}}\|u\|_{H^2(\R^4)}.
\end{equation}
Inequality \eqref{inje} derives immediately from the following
proposition due to Ruf and Sani in \cite{sharp}:
\begin{prop}\label{propr} There exists a finite constant $\kappa>0$ such that
\begin{equation}\label{2}
\sup_{u\in H^2(\R^4),\|u\|_{H^2(\R^4)}\leq
1}\;\;\int_{\R^4}\,\left({\rm e}^{32\pi^2
|u(x)|^2}-1\right)\;dx:=\kappa.
\end{equation}
\end{prop}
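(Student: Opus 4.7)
\emph{Strategy and reduction to the radial case.} The plan is to reduce the supremum to radial non-increasing functions, then split $\int_{\R^4}(e^{32\pi^2 u(x)^2}-1)\,dx$ over the exterior and the interior of a fixed ball. Schwarz symmetrization does not in general decrease $\|\Delta u\|_{L^2}$, so one cannot rearrange $u$ directly; instead, set $f:=(I-\Delta)u$ and observe that $\|f\|_{L^2}^2 = \|u\|_{L^2}^2 + 2\|\nabla u\|_{L^2}^2 + \|\Delta u\|_{L^2}^2$ is equivalent to $\|u\|_{H^2(\R^4)}^2$. Replacing $f$ by its Schwarz rearrangement $f^\#$, the function $v := G_2\ast f^\#$, where $G_2$ is the Bessel kernel of order two, is radial non-increasing, pointwise dominates $u^\#$ by Riesz's rearrangement inequality, and satisfies $\|v\|_{H^2(\R^4)}\lesssim \|u\|_{H^2(\R^4)}$. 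It is therefore enough to prove the proposition for such $v$ with $\|v\|_{H^2(\R^4)}\leq 1$.

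\emph{Exterior estimate.} For radial $v\in H^2(\R^4)$ with $\|v\|_{H^2}\leq 1$, the Strauss-type radial decay in dimension four yields $|v(x)|\leq C|x|^{-3/2}$ for $|x|\geq 1$. Choose $R>0$ so large that $32\pi^2\, v(x)^2 \leq 1$ on $\{|x|\geq R\}$. Since $e^t-1\leq (e-1)\,t$ on $[0,1]$,
\[
    \int_{|x|\geq R}\left(e^{32\pi^2\, v(x)^2}-1\right)dx \;\leq\; C\int_{\R^4}|v(x)|^2\,dx \;\leq\; C,
\]
uniformly in $v$.

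\emph{Interior estimate and main obstacle.} On the ball $B_R$ one appeals to the classical Adams inequality on bounded domains: for every $w\in W^{2,2}_0(B_{R+1})$ with $\|\Delta w\|_{L^2}\leq 1$, $\int e^{32\pi^2 w(x)^2}\,dx\leq C|B_{R+1}|$. Cut off $v$ by $\chi\in C_c^\infty(B_{R+1})$ with $\chi\equiv 1$ on $B_R$, and apply the inequality to the normalization $w:=\chi v/\|\Delta(\chi v)\|_{L^2}$. The chief obstacle is that $\|\Delta(\chi v)\|_{L^2}^2$ exceeds $\|\Delta v\|_{L^2}^2$ by commutator terms in $\nabla v$ and $v$, while the critical exponent $32\pi^2$ leaves no margin for loss. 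Following Ruf--Sani, the fix is to exploit the identity $\|\Delta v\|_{L^2}^2 = \|v\|_{H^2}^2 - \|v\|_{L^2}^2 - \|\nabla v\|_{L^2}^2\leq 1$ together with the radial decay of the previous step in order to absorb the lower-order contributions. Adding the exterior and interior bounds then produces the finite constant $\kappa$ claimed in \eqref{2}.
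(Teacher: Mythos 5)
First, a point of comparison: the paper does not prove this proposition at all --- it is quoted verbatim from Ruf--Sani \cite{sharp}, so what you are attempting is a self-contained proof of a deep cited theorem, and your text is an outline of that paper's strategy rather than a proof. Within that outline there are genuine gaps. In the reduction step, ``$v:=G_2*f^\#$ pointwise dominates $u^\#$ by Riesz's rearrangement inequality'' is not a correct invocation: Riesz's inequality is an integral inequality for $\int\int f\,g(x-y)\,h$, not a pointwise comparison, and the pointwise bound $u^\#\leq G_2*f^\#$ is not available. What one can do (and what Ruf--Sani essentially do) is first replace $u$ by $G_2*|f|\geq |u|$, then expand ${\rm e}^{32\pi^2 t^2}-1$ in even powers and apply the Brascamp--Lieb--Luttinger form of the rearrangement inequality term by term, which gives the integral comparison $\int({\rm e}^{32\pi^2 u^2}-1)\leq\int({\rm e}^{32\pi^2 v^2}-1)$ without any pointwise claim. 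Moreover your ``$\|v\|_{H^2}\lesssim\|u\|_{H^2}$'' is useless here: at the critical exponent $32\pi^2$ (which is sharp, see Remark 1.4 a)) any constant $C>1$ in the norm comparison destroys the argument, so you must work with the Bessel-potential norm $\|(I-\Delta)u\|_{L^2}$, for which the reduction is an exact isometry $\|(I-\Delta)v\|_{L^2}=\|f^\#\|_{L^2}=\|(I-\Delta)u\|_{L^2}$; this is precisely why that norm appears in \cite{sharp}.

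The more serious gap is the interior estimate, which is the heart of the theorem and which you do not actually prove. The scheme ``cut off by $\chi$ and normalize by $\|\Delta(\chi v)\|_{L^2}$'' fails for the reason you yourself name: the commutator terms $2\nabla\chi\cdot\nabla v+v\,\Delta\chi$ can push $\|\Delta(\chi v)\|_{L^2}$ above $1$, and with the critical constant there is no room to absorb this; note also that the radial decay $|v(x)|\lesssim|x|^{-3/2}$ (and similarly for $\partial_r v$) does not make these terms $L^2$-small on an annulus $\{R\leq|x|\leq R+1\}$ of volume $\sim R^3$, so ``exploit the identity $\ldots$ to absorb the lower-order contributions'' is a gesture at \cite{sharp}, not an argument. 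The actual Ruf--Sani proof splits the unit budget $\|v\|_{L^2}^2+2\|\nabla v\|_{L^2}^2+\|\Delta v\|_{L^2}^2\leq 1$ between the interior and exterior, rescales the exponent by the portion of $\|\Delta v\|_{L^2}^2$ available on the ball, and uses a sharp Adams-type inequality on balls for functions vanishing on the boundary but with no condition on the normal derivative (Navier-type conditions), which is itself a nontrivial ingredient that Adams' classical $W^{2,2}_0$ inequality does not cover (subtracting the boundary value $v(R)$, the natural move, does not produce a $W^{2,2}_0$ function). Your exterior estimate is fine and standard, but as it stands the proposal assumes the hard half of the theorem.
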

\noindent Let us notice that if we only require that $\|\Delta
u\|_{L^2(\mathbb{R}^4)}\leq 1$ then the following result established in \cite{com} holds.
\begin{prop}
Let $\beta\in[0,32\pi^2[$, then there exists $C_\beta>0$ such that
  \begin{equation}\label{3}
    \int_{\R^4}\left({\rm e}^{\beta |u(x)|^2}-1\right)dx\leq C_\beta\|u\|_{L^2(\R^4)}^2\quad\forall\, u\in H^2(\R^4)\;\mbox{with}\;\|\Delta u\|_{L^2}\leq 1,
  \end{equation}
and this inequality is false for $\beta\geq 32\pi^2$.
\end{prop}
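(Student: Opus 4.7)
The plan is to treat the two assertions separately: the integral inequality \eqref{3} for subcritical $\beta<32\pi^2$ via distributional estimates fed by Proposition \ref{propr}, and its failure at the critical threshold via an explicit Adams-Moser extremizer.

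For the positive bound, fix $\beta<32\pi^2$ and $u\in H^2(\R^4)$ with $\|\Delta u\|_{L^2}\leq 1$. The first step is to split the integral at level $1$: on $\{|u|\leq 1\}$ the elementary estimate $\mathrm{e}^t-1\leq t\mathrm{e}^\beta$ for $t\in[0,\beta]$ yields $\int_{\{|u|\leq 1\}}(\mathrm{e}^{\beta|u|^2}-1)dx\leq \beta\mathrm{e}^\beta\|u\|_{L^2}^2$. For the tail $\{|u|>1\}$, the layer-cake formula reduces the integral to $\int_1^\infty 2\beta t\mathrm{e}^{\beta t^2}|\{|u|>t\}|\,dt$ up to a boundary term bounded by Chebyshev; I would then control $|\{|u|>t\}|$ by interpolating between Chebyshev's bound $\|u\|_{L^2}^2/t^2$ at intermediate $t$ and the Adams-type tail $\kappa\,\mathrm{e}^{-32\pi^2 t^2/\|u\|_{H^2}^2}$ (obtained by applying Proposition \ref{propr} to $u/\|u\|_{H^2}$) at large $t$. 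When $\|u\|_{H^2}^2=1+\|u\|_{L^2}^2$ is close to $1$, the two bounds and an optimization over the crossover level yield $C_\beta\|u\|_{L^2}^2$. The general case is reduced to this one by a preliminary Littlewood-Paley split $u=u_L+u_H$ at a frequency $R$: Bernstein's inequality gives $\|u_L\|_{L^\infty}\leq CR^2\|u\|_{L^2}$ and $\|u_H\|_{H^2}^2\leq 1+R^{-4}$, and the pointwise bound $|u|^2\leq(1+\eta)|u_H|^2+(1+\eta^{-1})|u_L|^2$ combined with the identity $\mathrm{e}^{a+b}-1=\mathrm{e}^a(\mathrm{e}^b-1)+(\mathrm{e}^a-1)$ transfers the Adams-type estimate to $u_H$ with an arbitrarily small loss in the exponent for $R$ large.

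For the sharpness at $\beta=32\pi^2$ I would test on a smoothed, compactly supported modification of the radial Adams-Moser profile
\begin{equation*}
M_n(x)=\frac{1}{\sqrt{8\pi^2}}\begin{cases}\sqrt{\log n}, & |x|\leq 1/n,\\ -\log|x|/\sqrt{\log n}, & 1/n\leq|x|\leq 1,\\ 0, & |x|\geq 1,\end{cases}
\end{equation*}
with the corners regularized so that $M_n\in H^2(\R^4)$. A direct computation using $\Delta f=f''+\tfrac{3}{r}f'$ on radial functions gives $\|\Delta M_n\|_{L^2}^2\to 1$ and $\|M_n\|_{L^2}^2=O((\log n)^{-1})\to 0$, while on the ball $\{|x|\leq 1/n\}$ one has $32\pi^2 M_n^2\equiv 4\log n$; hence $\int_{\{|x|\leq 1/n\}}(\mathrm{e}^{32\pi^2 M_n^2}-1)dx\geq c\,n^{-4}\cdot n^4=c>0$, forcing divergence of the ratio in \eqref{3}. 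Monotonicity in $\beta$ then extends the failure to all $\beta\geq 32\pi^2$. The main obstacle is the positive part in the large-$L^2$ regime: the distributional Adams bound alone degrades to the weaker threshold $32\pi^2/(1+\|u\|_{L^2}^2)$, and matching the critical exponent requires the frequency splitting together with a careful tuning of $R$ and $\eta$ so that the effective exponent on $u_H$ stays strictly below $32\pi^2$ while the small-$|u|$ part and Chebyshev on the intermediate range still furnish the linear factor $\|u\|_{L^2}^2$ on the right-hand side.
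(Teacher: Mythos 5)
First, note that the paper does not prove this proposition at all: it is quoted from the Masmoudi--Sani preprint \cite{com}, so there is no in-text proof to compare against. Your sharpness argument is essentially sound and mirrors the paper's own concentration example $f_\alpha$ (which is exactly your $M_n$ with $\alpha=\log n$ and the corners regularized); on the inner ball $32\pi^2 M_n^2=4\log n$ exactly cancels the volume factor $n^{-4}$, the $L^2$ norm decays like $(\log n)^{-1}$, and monotonicity in $\beta$ finishes the job.

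The positive part, however, has a genuine gap, and it occurs at precisely the point you flag as "the main obstacle" without actually resolving it. Two steps fail as written. (a) In the regime $\|u\|_{H^2}^2$ close to $1$, the crossover optimization between Chebyshev and the Adams tail does \emph{not} yield $C_\beta\|u\|_{L^2}^2$: the Adams tail $\kappa\,{\rm e}^{-32\pi^2t^2/\|u\|_{H^2}^2}$ carries no smallness in $\|u\|_{L^2}$, and optimizing the splitting level $T$ (or interpolating the two distribution bounds with weight $\theta$) returns at best $\|u\|_{L^2}^{2\theta}$ with $\theta<1-\beta\|u\|_{H^2}^2/32\pi^2<1$, a strictly weaker power than the linear one claimed. (b) In the general case, your Littlewood--Paley split gives $\|u_L\|_{L^\infty}\lesssim R^2\|u\|_{L^2}$, which is \emph{unbounded} as $\|u\|_{L^2}\to\infty$; the factor ${\rm e}^{(1+\eta^{-1})\beta\|u_L\|_{L^\infty}^2}$ appearing in the identity ${\rm e}^{a+b}-1={\rm e}^{b}({\rm e}^{a}-1)+({\rm e}^{b}-1)$ is then of size ${\rm e}^{cR^4\|u\|_{L^2}^2}$ and cannot be absorbed into $C_\beta\|u\|_{L^2}^2$. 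The missing idea that repairs both defects at once is the scale invariance of the hypothesis: in $\R^4$ the map $u\mapsto u(\cdot/\lambda)$ preserves $\|\Delta u\|_{L^2}$ while multiplying both $\|u\|_{L^2}^2$ and $\int({\rm e}^{\beta u^2}-1)\,dx$ by $\lambda^4$, so one may assume $\|u\|_{L^2}=1$ from the outset and it then suffices to prove a \emph{constant} bound. After this normalization your machinery does close: Bernstein gives $\|u_L\|_{L^\infty}\lesssim R^2$ uniformly, $\|u_H\|_{H^2}^2\leq 1+O(R^{-2})$ brings the Adams exponent for $u_H$ above $(1+\eta)\beta$ for $R$ large, and the low-frequency and small-$|u|$ contributions are controlled by $\|u\|_{L^2}^2=1$. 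Without that reduction, the argument as proposed does not reach the stated inequality.
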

\begin{rems}The well-known following properties can be found in \cite{com} and \cite{sharp}.\\
  {\bf a)} The inequality \eqref{2} is sharp.\\
  {\bf b)} There exists a positive constant $C$ such that for any domain $\Omega\subseteq\R^4$ 
$$\sup_{u\in H^2(\Omega),\|(-\Delta+I)u\|_{L^2(\Omega)}\leq 1}\Int_\Omega \Big({\rm e}^{32\pi^2|u(x)|^2}-1\Big)\; dx\leq C.$$
{\bf c)} In dimension 2, the inequality \eqref{3} is replaced by the following Trudinger-Moser type inequality (see \cite{AT} and \cite{Ruf}):\\
 Let $\alpha\in [0,4\pi [$. A constant $C_\alpha$ exists such that
\begin{equation}
\label{Mos1} \int_{\R^2}\,\left({\rm e}^{\alpha
|u(x)|^2}-1\right)\;dx\leq C_\alpha \|u\|_{L^2(\R^2)}^2\quad\forall\,u \in H^1(\R^2)\;\mbox{with}\;\|\nabla
u\|_{L^2(\R^2)}\leq 1. 
\end{equation}
 Moreover, if $\alpha\geq 4\pi$ then
\eqref{Mos1} is false.
\end{rems}
\subsection{Lack of compactness in 4D critical Sobolev embedding in Orlicz space}
The embedding of $H^2(\R^4)$ into the Orlicz space is non compact. Firstly, we have a lack of compactness at infinity as shown by the following example:
$$u_k(x)=\varphi(x+x_k),\quad \varphi\in \mathcal{D}(\R^4)\setminus\left\{0\right\}\quad \mbox{and}\quad|x_k|\underset{k\rightarrow\infty}\longrightarrow \infty.$$
 Secondly, we have a lack of compactness generated by a concentration phenomenon as illustrated by the following example (see \cite{sharp} for instance):
\begin{equation}\label{exem}
 f_{\alpha}(x)=\left\{%
\begin{array}{ll}
\sqrt{\frac{\alpha}{8 {\pi}^2}}+ \frac{1-|x|^2 \rm{e}^{2\alpha}}{\sqrt{32\pi^2 \alpha}} & \hbox{if $|x|\leq {\rm{e}}^{-\alpha}$} \\\\
\frac{-\log|x|}{\sqrt{8\pi^2 \alpha}} &\hbox{if ${\rm{e}}^{-\alpha}< |x|\leq 1$}\\\\
\eta_{\alpha}(x)&\hbox{if $|x|> 1$},
\end{array}%
\right.
\end{equation}
where $\eta_{\alpha}\in \mathcal{D}(\R^4)$ and satisfies the following boundary conditions:
$$ {\eta_{\alpha}}_{|\partial B_1}=0,\;
  {\frac{\partial\eta_{\alpha}}{\partial \nu}}_{\Big|\partial B_1}=\frac{1}{\sqrt{8\pi^2\alpha}},$$
with $B_1$ is the unit ball in $\R^4$. In addition, $\eta_{\alpha}$, $\nabla \eta_{\alpha}$, $\Delta \eta_{\alpha}$ are all equal to $O\Big(\Frac{1}{\sqrt{\alpha}}\Big)($\footnote{The notation $g(\alpha)=O(h(\alpha))$ as $\alpha\rightarrow +\infty$, where $g$ and $h$ are two functions defined on some neighborhood of infinity, means the existence of positive numbers $\alpha_0$ and $C$ such that for any $\alpha> \alpha_0$ we have $|g(\alpha)|\leq C |h(\alpha)|$.}) as $\alpha$ tends to infinity.\\
By a simple calculation (see Appendix A), we obtain that
$$\|f_{\alpha}\|_{L^2}^2=O\Big(\frac{1}{\alpha}\Big),\; \|\nabla f_{\alpha}\|_{L^2}^2=O\Big(\frac{1}{\alpha}\Big)\;\mbox{and}\; \|\Delta f_{\alpha}\|_{L^2}^2=1+O\Big(\frac{1}{\alpha}\Big)\quad\mbox{as}\;\alpha\rightarrow+\infty.$$
 Also, we can see that $f_{\alpha}\underset{\alpha\rightarrow\infty}\rightharpoonup 0$ in $H^2(\mathbb{R}^4)$.\\
The lack of compactness in the Orlicz space $\mathcal{L}(\R^4)$ displayed by the sequence $(f_{\alpha})$ when $\alpha$ goes to infinity can be stated qualitatively as follows:
\begin{prop} The sequence $(f_{\alpha})$ defined by \eqref{exem} satisfies:
$$\|f_{\alpha}\|_{\mathcal{L}}\rightarrow \frac{1}{\sqrt{32\pi^2}},\mbox{\,as\,}\, \alpha\rightarrow +\infty.$$
\end{prop}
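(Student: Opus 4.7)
The plan is to bracket $\|f_\alpha\|_{\mathcal L}$ from above and below, showing that both bounds converge to $\frac{1}{\sqrt{32\pi^2}}$ as $\alpha\to\infty$.

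\textbf{Upper bound.} This will be the soft half. From the computations announced before the proposition (and proved in Appendix A), one has
\[
\|f_\alpha\|_{H^2(\R^4)}^2 \;=\; \|f_\alpha\|_{L^2}^2 + \|\nabla f_\alpha\|_{L^2}^2 + \|\Delta f_\alpha\|_{L^2}^2 \;=\; 1 + O\!\Big(\frac{1}{\alpha}\Big).
\]
Plugging this into the Sobolev-Orlicz embedding \eqref{inje} gives
\[
\|f_\alpha\|_{\mathcal L} \;\leq\; \tfrac{1}{\sqrt{32\pi^2}}\,\|f_\alpha\|_{H^2(\R^4)} \;=\; \tfrac{1}{\sqrt{32\pi^2}}\bigl(1+o(1)\bigr),
\]
so $\limsup_{\alpha\to\infty}\|f_\alpha\|_{\mathcal L}\leq \frac{1}{\sqrt{32\pi^2}}$.

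\textbf{Lower bound.} Here the idea is to exploit the concentration at the origin directly through the definition of $\|\cdot\|_{\mathcal L}$. Fix any $\lambda$ with $0<\lambda<\frac{1}{\sqrt{32\pi^2}}$; I will show that for $\alpha$ large enough,
\[
\int_{\R^4}\Bigl({\rm e}^{|f_\alpha(x)|^2/\lambda^2}-1\Bigr)\,dx \;>\; \kappa,
\]
which by the definition of the Luxemburg norm forces $\|f_\alpha\|_{\mathcal L}>\lambda$. Since $1-|x|^2{\rm e}^{2\alpha}\geq 0$ on $B(0,{\rm e}^{-\alpha})$, the explicit formula \eqref{exem} gives
\[
|f_\alpha(x)|\;\geq\;\sqrt{\tfrac{\alpha}{8\pi^2}}\qquad\text{for every } |x|\leq {\rm e}^{-\alpha}.
\]
Restricting the integral to this ball (whose 4-dimensional volume is $\frac{\pi^2}{2}{\rm e}^{-4\alpha}$) yields
\[
\int_{\R^4}\Bigl({\rm e}^{|f_\alpha|^2/\lambda^2}-1\Bigr)dx \;\geq\; \frac{\pi^2}{2}\,{\rm e}^{-4\alpha}\Bigl({\rm e}^{\alpha/(8\pi^2\lambda^2)}-1\Bigr) \;=\;\frac{\pi^2}{2}\,{\rm e}^{\,(\frac{1}{8\pi^2\lambda^2}-4)\alpha}+o(1).
\]
The assumption $\lambda^2<\frac{1}{32\pi^2}$ is exactly $\frac{1}{8\pi^2\lambda^2}>4$, so the right-hand side tends to $+\infty$ and eventually exceeds $\kappa$. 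Thus $\|f_\alpha\|_{\mathcal L}>\lambda$ for $\alpha$ large, and letting $\lambda\nearrow\frac{1}{\sqrt{32\pi^2}}$ gives $\liminf_{\alpha\to\infty}\|f_\alpha\|_{\mathcal L}\geq\frac{1}{\sqrt{32\pi^2}}$.

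\textbf{Expected obstacle.} The only delicate ingredient is the asymptotic expansion of $\|\Delta f_\alpha\|_{L^2}^2=1+O(1/\alpha)$, together with the $O(1/\alpha)$ bounds on $\|f_\alpha\|_{L^2}^2$ and $\|\nabla f_\alpha\|_{L^2}^2$. These need a careful splitting of the integrals over the three regions in \eqref{exem}, using that the principal contribution comes from the logarithmic piece $\frac{-\log|x|}{\sqrt{8\pi^2\alpha}}$ on ${\rm e}^{-\alpha}<|x|\leq 1$, whose Laplacian has $L^2$-norm exactly $1+O(1/\alpha)$, while the inner polynomial piece and the cutoff $\eta_\alpha$ contribute only $O(1/\alpha)$ terms. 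These routine but slightly tedious calculations are exactly what Appendix~A is designed to absorb; once they are admitted, the two bounds above close the proof.
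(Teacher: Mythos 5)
Your proof is correct, and the lower bound is essentially the paper's own argument: you both restrict the Orlicz integral to the ball $\{|x|\leq {\rm e}^{-\alpha}\}$, use $f_\alpha\geq\sqrt{\alpha/(8\pi^2)}$ there, and compare the resulting exponential growth ${\rm e}^{(\frac{1}{8\pi^2\lambda^2}-4)\alpha}$ with the fixed constant $\kappa$ (the paper phrases it as a bound on any admissible $\lambda$ rather than a contradiction, but the computation is identical). Where you genuinely diverge is the upper bound: the paper does not use the embedding \eqref{inje}; instead it fixes $\varepsilon>0$, applies the subcritical Adams-type inequality \eqref{3} with $\beta=32\pi^2-\varepsilon$ to $f_\alpha/\|\Delta f_\alpha\|_{L^2}$, and lets $\|f_\alpha\|_{L^2}\to 0$ kill the right-hand side, which yields $\limsup\|f_\alpha\|_{\mathcal L}^2\leq\frac{1}{32\pi^2-\varepsilon}$ before sending $\varepsilon\to 0$. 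Your route is softer: it only invokes \eqref{inje} together with $\|f_\alpha\|_{H^2}=1+o(1)$, so it bypasses \eqref{3} entirely; the price is that you need the full $H^2$ asymptotics (including $\|\nabla f_\alpha\|_{L^2}\to 0$) and you implicitly rely on the norm convention under which Proposition \ref{propr} holds with the sharp constant $32\pi^2$ (in Ruf--Sani this is $\|(-\Delta+I)u\|_{L^2}$, whose square carries a factor $2$ on the gradient term rather than your $1$). This is harmless here, since the $L^2$ and gradient contributions are both $O(1/\alpha)$ so that any of these equivalent expressions tends to $1$, but it is worth stating which norm you normalize by. The paper's choice, by contrast, only needs $\|\Delta f_\alpha\|_{L^2}\to 1$ and $\|f_\alpha\|_{L^2}\to 0$ and is insensitive to that normalization; both routes ultimately lean on the Appendix~A computations.
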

\begin{proof}
Firstly, we shall prove that $\Liminf_{\alpha\rightarrow \infty}\|f_{\alpha}\|_{\mathcal{L}}\geq\frac{1}{\sqrt{32\pi^2}}.$
For that purpose, let us consider $\lambda > 0$ such that
\begin{equation*}
  \displaystyle\int_{\mathbb{R}^4}\left({\rm e}^{\frac{|f_\alpha(x)|^2}{\lambda^2}}-1\right)\;dx \leq \kappa.
\end{equation*}
Then
\begin{equation*}
    \int_{ |x|\leq \rm{e}^{-\alpha}} \left({\rm e}^{\frac{|f_{\alpha}(x)|^2}{\lambda^2}}-1\right) dx \leq \kappa.
\end{equation*}
But for $|x|\leq \rm{e}^{-\alpha}$, we have
$$f_{\alpha}(x)=\sqrt{\frac{\alpha}{8\pi^2}}+\frac{1-|x|^2 \rm{e}^{2\alpha}}{\sqrt{32\pi^2 \alpha}}\geq \sqrt{\frac{\alpha}{8\pi^2}}.$$
So we deduce that\begin{equation*}
        \displaystyle 2\pi^2 \int_{0}^{{\rm e}^{-\alpha}}\left({\rm e}^{\frac{\alpha}{8\pi^2 \lambda^2}}-1\right)r^3\;dr  \leq \kappa.
       \end{equation*}
Consequently,
 \begin{equation*}
            2\pi^2 \left(\rm{e}^{\frac{\alpha}{8\pi^2\lambda^2}}-1\right)\frac{\rm{e}^{-4\alpha}}{4}\leq \kappa,
            \end{equation*}
which implies that \begin{eqnarray*}
         % \nonumber to remove numbering (before each equation)
           \lambda^2 &\geq&\Frac{1}{ 32\pi^2+\frac{8\pi^2}{\alpha} \log(\frac{2\kappa}{\pi^2}+\rm{e}^{-4\alpha})}\underset{\alpha\rightarrow \infty}{\longrightarrow}\frac{1}{32\pi^2}.
         \end{eqnarray*}
This ensures that
          $$\Liminf_{\alpha\rightarrow \infty}\|f_\alpha\|_{\mathcal{L}} \geq \frac{1}{\sqrt{32\pi^2}}.$$
To conclude, it suffices to show that $\Limsup_{\alpha\rightarrow \infty}\|f_\alpha\|_{\mathcal{L}} \leq \frac{1 }{\sqrt{32\pi^2}}$. To go to this end, let us fix $\varepsilon > 0$ and use Inequality \eqref{3} with $\beta=32\pi^2-\varepsilon$. Thus, there exists $C_{\varepsilon}>0$ such that
\begin{eqnarray*}
% \nonumber to remove numbering (before each equation)
  \int_{\mathbb{R}^4} \left({\rm e}^{(32\pi^2-\varepsilon)\frac{|f_{\alpha}(x)|^2}{\|\Delta f_{\alpha}\|_{L^2}^{2}}}-1\right) \;dx
 \leq C_{\varepsilon}\frac{\|f_\alpha\|_{L^2}^2}{\|\Delta f_{\alpha}\|_{L^2}^2}.
\end{eqnarray*}
The fact that $\underset{\alpha \rightarrow \infty}{\lim}\|f_\alpha\|_{L^2}=0$  leads to
\begin{equation*}
% \nonumber to remove numbering (before each equation)
  \Limsup_{\alpha\rightarrow \infty}\|f_{\alpha}\|_{\mathcal{L}}^2\leq \frac{1}{32\pi^2-\varepsilon},
\end{equation*}
which ends the proof of the result.
\end{proof}
The following result specifies the concentration effect revealed by the family $(f_\alpha)$:
\begin{prop}\label{concentration}
With the above notation, we have
\begin{eqnarray*}
% \nonumber to remove numbering (before each equation)
|\Delta f_{\alpha}|^2\rightarrow \delta(x=0)\quad\mbox{and}\quad {\rm e}^{32\pi^2|f_{\alpha}|^2}-1\rightarrow \frac{\pi^2}{16}({\rm e}^4+3)\delta(x=0)\quad\mbox{as}\quad \alpha\rightarrow\infty\quad\mbox{in}\quad \mathcal{D}'(\R^4).
\end{eqnarray*}
\end{prop}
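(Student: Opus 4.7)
My plan is to test both distributional convergences against an arbitrary $\varphi\in\mathcal{D}(\R^4)$, exploit the radial symmetry of $f_\alpha$ by introducing the spherical average $\bar\varphi(r)=\frac{1}{2\pi^2}\int_{S^3}\varphi(r\omega)\,d\sigma(\omega)$ (which is smooth and satisfies $\bar\varphi(0)=\varphi(0)$), and split every integral into the three zones $\{|x|\leq e^{-\alpha}\}$, $\{e^{-\alpha}<|x|\leq 1\}$, $\{|x|>1\}$ on which $f_\alpha$ has an explicit form.

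For the Laplacian statement, a direct radial computation in $\R^4$ gives $\Delta f_\alpha=-\frac{8 e^{2\alpha}}{\sqrt{32\pi^2\alpha}}$ (a constant) on the inner ball and $\Delta f_\alpha(r)=-\frac{2}{r^2\sqrt{8\pi^2\alpha}}$ on the annulus; combined with the hypothesis $\Delta\eta_\alpha=O(\alpha^{-1/2})$ on the compactly supported exterior piece, the inner and outer contributions are each $O(\alpha^{-1})\|\varphi\|_\infty$. The annular contribution reduces to $\frac{1}{\alpha}\int_{e^{-\alpha}}^{1}\frac{\bar\varphi(r)}{r}\,dr$, and writing $\bar\varphi(r)=\varphi(0)+(\bar\varphi(r)-\varphi(0))$ the first piece gives exactly $\varphi(0)$ while the remainder is $O(\alpha^{-1})$ by Lipschitz continuity of $\bar\varphi$ at $0$. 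This yields $|\Delta f_\alpha|^2\to\delta(x=0)$.

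For the exponential statement, the exterior zone is harmless by the same $|\eta_\alpha|=O(\alpha^{-1/2})$ bound and compact support. On the annulus I set $t=-\log r$, so that $32\pi^2|f_\alpha|^2=4t^2/\alpha$ and the integral becomes $2\pi^2\int_0^\alpha(e^{4t^2/\alpha}-1)e^{-4t}\bar\varphi(e^{-t})\,dt$. I split at $t=\alpha/2$: on $[0,\alpha/2]$ one has $4t^2/\alpha-4t\leq -2t$, so dominated convergence applies separately to $\int e^{4t^2/\alpha-4t}\bar\varphi(e^{-t})dt$ and to $\int e^{-4t}\bar\varphi(e^{-t})dt$, and both produce the same limit $\int_0^\infty e^{-4t}\bar\varphi(e^{-t})\,dt$ so their difference is $o(1)$. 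On $[\alpha/2,\alpha]$, the substitution $u=\alpha-t$ turns the exponent into $-4u+4u^2/\alpha$ and the test function into $\bar\varphi(e^{u-\alpha})$; dominated convergence gives the limit $\varphi(0)\int_0^\infty e^{-4u}du=\varphi(0)/4$, so the annulus contributes $\pi^2\varphi(0)/2$. Finally, on the inner ball the expansion of the perfect square yields $32\pi^2|f_\alpha|^2=4\alpha+4-4u+O(\alpha^{-1})$ with $u=|x|^2 e^{2\alpha}\in[0,1]$, and the change of variable $r=e^{-\alpha}\sqrt u$ converts the integral into $\pi^2 e^4\int_0^1 u\,e^{-4u}\bar\varphi(e^{-\alpha}\sqrt u)\,du+o(1)$; dominated convergence and the explicit value $\int_0^1 u e^{-4u}du=(1-5e^{-4})/16$ give the contribution $\pi^2\varphi(0)(e^4-5)/16$. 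Summing, $\frac{\pi^2\varphi(0)}{2}+\frac{\pi^2\varphi(0)(e^4-5)}{16}=\frac{\pi^2\varphi(0)(e^4+3)}{16}$, as claimed.

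The main obstacle is the annular part of the exponential statement: the factor $(e^{4t^2/\alpha}-1)$ is not small and the integrand $(e^{4t^2/\alpha}-1)e^{-4t}$ is of order $1$ at both endpoints $t=0$ and $t=\alpha$, so a crude dominated convergence argument cannot conclude. The correct picture is that the annular mass concentrates at the \emph{right} endpoint $t\approx\alpha$, i.e.\ in the thin shell $|x|\approx e^{-\alpha}$, together with an independent concentration inside $\{|x|\leq e^{-\alpha}\}$; both have to be computed separately and added, with careful bookkeeping of the ``$-1$'' in $e^{32\pi^2|f_\alpha|^2}-1$ and of the constants $\pi^2$, $e^4$ and $5$ that conspire to produce the factor $\frac{\pi^2(e^4+3)}{16}$.
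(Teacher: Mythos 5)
Your proof is correct and follows essentially the same route as the paper's: test against $\varphi\in\mathcal{D}(\R^4)$, split into the three regions $\{|x|\leq {\rm e}^{-\alpha}\}$, $\{{\rm e}^{-\alpha}<|x|\leq 1\}$, $\{|x|>1\}$, and obtain the same zone-by-zone limits ($\varphi(0)$ for $|\Delta f_\alpha|^2$, and $\frac{\pi^2}{16}({\rm e}^4-5)\varphi(0)$ plus $\frac{\pi^2}{2}\varphi(0)$ for the exponential term). The only notable difference is technical rather than structural: you work with the spherical average of $\varphi$ instead of subtracting $\varphi(0)$, and you replace the paper's appeal to Lemma \ref{add1} (whose proof is only cited from \cite{Bahouri}) by a self-contained dominated-convergence argument obtained by splitting the annular integral at $t=\alpha/2$ and substituting $u=\alpha-t$, which makes the concentration at $|x|\approx{\rm e}^{-\alpha}$ explicit.
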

\begin{proof}
For any smooth compactly supported function $\varphi$, let us write
\begin{eqnarray*}
% \nonumber to remove numbering (before each equation)
  \Int_{\R^4} |\Delta f_{\alpha}(x)|^2 \varphi(x)\; dx &=& I_\alpha+J_\alpha+K_\alpha,
\end{eqnarray*}
with
\begin{eqnarray*}
% \nonumber to remove numbering (before each equation)
  I_\alpha &=& \int_{|x|\leq {\rm{e}}^{-\alpha}} |\Delta f_{\alpha}(x)|^2 \varphi(x)\; dx,\\
  J_\alpha &=& \int_{{\rm{e}}^{-\alpha}\leq |x|\leq 1} |\Delta f_{\alpha}(x)|^2 \varphi(x) \; dx\quad\mbox{and} \\
  K_\alpha &=& \int_{|x|\geq1} |\Delta f_{\alpha}(x)|^2 \varphi(x)\; dx.
\end{eqnarray*}
Noticing that $\Delta f_{\alpha}(x)=\frac{-8{\rm{e}}^{2\alpha}}{\sqrt{32\pi^2\alpha}}$ if $|x|\leq {\rm{e}}^{-\alpha}$, we get
\begin{equation*}
|I_\alpha|\leq\frac{\|\varphi\|_{L^{\infty}}}{\alpha}\underset{\alpha\rightarrow \infty}{\longrightarrow}0.
\end{equation*}
This ends the proof of the first assertion.

On the other hand, as $\Delta f_{\alpha}=\frac{-2}{|x|^2 \sqrt{8\pi^2 \alpha}}$ if ${\rm{e}}^{-\alpha}\leq |x|\leq 1,$
we get
\begin{eqnarray*}
% \nonumber to remove numbering (before each equation)
  J_\alpha
  &=& \frac{1}{2\pi^2 \alpha}\Int_{{\rm{e}}^{-\alpha}\leq |x|\leq 1} \frac{1}{|x|^4} \varphi(0)\; dx + \frac{1}{2\pi^2 \alpha}\Int_{{\rm{e}}^{-\alpha}\leq |x|\leq 1}  \frac{1}{|x|^4} \big(\varphi(x)-\varphi(0)\big)\; dx\\
  &=& \varphi(0)+ \frac{1}{2\pi^2 \alpha}\Int_{{\rm{e}}^{-\alpha}\leq |x|\leq 1}  \frac{1}{|x|^4} \big(\varphi(x)-\varphi(0)\big)\; dx.
\end{eqnarray*}
Using the fact that $|\varphi(x)-\varphi(0)|\leq |x| \|\nabla \varphi\|_{L^{\infty}}$ we obtain that
\begin{eqnarray*}
% \nonumber to remove numbering (before each equation)
|J_\alpha-\varphi(0)|\leq \frac{\|\nabla \varphi\|_{L^{\infty}}}{\alpha}(1-{\rm{e}}^{-\alpha})\underset{\alpha\rightarrow \infty}{\longrightarrow} 0.
 \end{eqnarray*}
Finally, taking advantage of the existence of a positive constant $C$ such that\\ $\|\Delta\eta_{\alpha}\|_{L^{\infty}}\leq \frac{C}{\sqrt{\alpha}}$ and as $\varphi$ is a smooth compactly supported function, we deduce that
\begin{equation*}
    |K_\alpha| \underset{\alpha\rightarrow \infty}{\longrightarrow} 0.
\end{equation*}
This ends the proof of the first assertion. For the second assertion, we write
\beqn
\Int_{\R^4}\left({\rm e}^{32\pi^2|f_\alpha(x)|^2}-1\right)\varphi(x)\;dx&=&L_\alpha+M_\alpha+N_\alpha,
\eeqn
where
\begin{eqnarray*}
% \nonumber to remove numbering (before each equation)
  L_\alpha &=& \int_{|x|\leq {\rm{e}}^{-\alpha}
  } \left({\rm{e}}^{32\pi^2|f_{\alpha}(x)|^2}-1\right) \varphi(x)\; dx,\\
  M_\alpha &=& \int_{{\rm{e}}^{-\alpha}\leq |x| \leq 1} \left({\rm{e}}^{32\pi^2 |f_{\alpha}(x)|^2}-1\right) \varphi(x)\; dx\quad\mbox{and}\\
 N_\alpha &=&\int_{ |x| \geq 1} \left({\rm{e}}^{32\pi^2 |f_{\alpha}(x)|^2}-1\right) \varphi(x)\; dx.
\end{eqnarray*}
We have
$$  L_\alpha = \int_{|x|\leq {\rm{e}}^{-\alpha}} \left({\rm{e}}^{32\pi^2|f_{\alpha}(x)|^2}-1\right) \big(\varphi(x)-\varphi(0)\big)\; dx+\int_{|x|\leq {\rm{e}}^{-\alpha}
  } \left({\rm{e}}^{32\pi^2|f_{\alpha}(x)|^2}-1\right) \varphi(0)\; dx.$$
Arguing as above, we infer that
$$\left|L_\alpha-\int_{|x|\leq {\rm{e}}^{-\alpha}
  } \left({\rm{e}}^{32\pi^2|f_{\alpha}(x)|^2}-1\right) \varphi(0)\; dx\right|\leq
2\pi^2\|\nabla \varphi\|_{L^{\infty}} \left({\rm{e}}^{32\pi^2\left({\sqrt{\frac{\alpha}{8\pi^2}}}+\frac{1}{\sqrt{32\pi^2 \alpha}}\right)^2}-1\right)\frac{{\rm{e}}^{-5\alpha}}{5}.$$
As the right hand side of the last inequality goes to zero when $\alpha$ tends to infinity, we find that
\begin{equation*}
    \left|L_\alpha-\int_{|x|\leq {\rm{e}}^{-\alpha}
  } \left({\rm{e}}^{32\pi^2|f_{\alpha}(x)|^2}-1\right) \varphi(0)\; dx\right|\underset{\alpha\rightarrow \infty}\longrightarrow 0.
\end{equation*}

Besides,
\begin{eqnarray*}  \int_{|x|\leq {\rm{e}}^{-\alpha}} \left({\rm{e}}^{32\pi^2|f_{\alpha}(x)|^2}-1\right) \varphi(0)\; dx &=&
2\pi^2{\rm e}^{4(\alpha+1)}{\rm e}^\frac{1}{\alpha}\varphi(0)\Int_0^{{\rm e}^{-\alpha}}{\rm e}^{\frac{{\rm e}^{4\alpha}}{\alpha}r^4-2{\rm{e}}^{2\alpha}(2+\frac{1}{\alpha})r^2}r^3\;dr\\
&-&\frac{\pi^2}{2}\varphi(0){\rm e}^{-4\alpha}.
\end{eqnarray*}
Now, performing the change of variable $s=r{\rm{e}}^{\alpha}$, we get
$$  \int_{|x|\leq {\rm{e}}^{-\alpha}} \left({\rm{e}}^{32\pi^2|f_{\alpha}(x)|^2}-1\right) \varphi(0)\; dx =
2\pi^2{\rm e}^{\frac{1}{\alpha}+4}\varphi(0)\Int_0^1s^3{\rm e}^{\frac{s^4}{\alpha}-2(2+\frac{1}{\alpha})s^2}\;ds\\
-\frac{\pi^2}{2}\varphi(0){\rm e}^{-4\alpha},$$
which implies, in view of Lebesgue's theorem, that
\begin{equation*}
    \underset{\alpha\rightarrow \infty}{\lim} L_\alpha= 2\pi^2{\rm e}^4\varphi(0)\int_{0}^{1} s^3\,{\rm{e}}^{-4s^2}\;ds=\frac{\pi^2}{16}({\rm e}^4-5)\varphi(0).
\end{equation*}
Also, writing
\begin{equation*}
    M_\alpha=\int_{{\rm{e}}^{-\alpha}\leq |x|\leq 1}\big(\varphi(x)-\varphi(0)\big)\Big({\rm{e}}^{\frac{4(\log|x|)^2}{\alpha}}-1\Big)\;dx+\int_{{\rm{e}}^{-\alpha}\leq |x|\leq 1}\varphi(0)\Big({\rm{e}}^{\frac{4(\log|x|)^2}{\alpha}}-1\Big)\; dx,
\end{equation*}
 we infer that $M_\alpha$ converges to $\Frac{\pi^2}{2}\varphi(0)$ by using the following lemma the proof of which is similar to that of Lemma 1.9 in \cite{Bahouri}.
\end{proof}
\begin{lem}
\label{add1}
When $\alpha$ goes to infinity,
$$\Int_{{\rm e}^{-\alpha}}^1\;r^4\,{\rm e}^{\frac{4}{\alpha}\log^2 r}\;
dr\longrightarrow \frac{1}{5}\quad \mbox{and}\quad \Int_{{\rm e}^{-\alpha}}^1\;r^3\,{\rm
e}^{\frac{4}{\alpha}\log^2 r}\; dr\longrightarrow \frac{1}{2}.$$
\end{lem}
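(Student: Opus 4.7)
The plan is to make the substitution $r=e^{-t}$, which maps $[e^{-\alpha},1]$ bijectively onto $[0,\alpha]$, so that $dr=-e^{-t}\,dt$ and $\log^2 r=t^2$. Both integrals then take the form of Laplace-type integrals on $[0,\alpha]$, namely
\[
I_1(\alpha):=\int_0^\alpha e^{-5t+4t^2/\alpha}\,dt\quad\text{and}\quad I_2(\alpha):=\int_0^\alpha e^{-4t+4t^2/\alpha}\,dt,
\]
in which the nonlinear term $4t^2/\alpha$ acts as a small perturbation of an exponential decay for large $\alpha$. The strategy is to prove the convergence of each by dominated convergence after extending the integrands by zero to $[0,\infty)$.

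For $I_1$, I would use the elementary inequality $t^2/\alpha\le t$ valid on $[0,\alpha]$ to obtain the pointwise domination
\[
e^{-5t+4t^2/\alpha}\le e^{-5t+4t}=e^{-t},
\]
an integrable majorant on $[0,\infty)$. Since the integrand converges pointwise to $e^{-5t}$ as $\alpha\to\infty$, dominated convergence gives $I_1(\alpha)\to\int_0^\infty e^{-5t}\,dt=1/5$, which corresponds to the first limit in the statement.

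The integral $I_2$ is more delicate: its exponent $-4t+4t^2/\alpha$ vanishes not only at $t=0$ but also at $t=\alpha$, so no uniform integrable majorant exists on the whole interval. The key observation is that this exponent is invariant under $t\mapsto\alpha-t$, so folding the integral around its axis of symmetry yields
\[
I_2(\alpha)=2\int_0^{\alpha/2}e^{-4t+4t^2/\alpha}\,dt.
\]
On the halved interval one has $t^2/\alpha\le t/2$, which produces the integrable majorant $e^{-2t}$, and dominated convergence then gives $I_2(\alpha)\to 2\int_0^\infty e^{-4t}\,dt=1/2$, the second limit. The main obstacle is precisely this absence of a uniform bound on the full interval for $I_2$; the symmetry $f(\alpha-t)=f(t)$ with $f(t)=-4t+4t^2/\alpha$ is the essential trick that collapses the two non-negligible endpoint regions into a single one, after which the argument mirrors the one used for $I_1$.
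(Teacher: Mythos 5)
Your argument is correct: the substitution $r={\rm e}^{-t}$ does turn the two integrals into $\int_0^\alpha {\rm e}^{-5t+4t^2/\alpha}\,dt$ and $\int_0^\alpha {\rm e}^{-4t+4t^2/\alpha}\,dt$, the bound $t^2/\alpha\le t$ on $[0,\alpha]$ legitimizes dominated convergence for the first, and you correctly identified the delicate point in the second: the exponent $-4t+4t^2/\alpha$ vanishes at both endpoints, so the mass splits equally between $t\approx 0$ and $t\approx\alpha$, and the symmetry $f(\alpha-t)=f(t)$ together with $t^2/\alpha\le t/2$ on $[0,\alpha/2]$ gives exactly $2\cdot\frac14=\frac12$ (consistent with the value needed for $M_\alpha\to\frac{\pi^2}{2}\varphi(0)$ in Proposition \ref{concentration}). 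Note that the paper does not prove this lemma at all: it merely states that the proof is ``similar to that of Lemma 1.9 in \cite{Bahouri}'', where the analogous one-dimensional integrals arising from the 2D Moser-type example are analyzed after the same logarithmic change of variables, with the endpoint contribution extracted by a more computational route (essentially explicit asymptotics of the resulting integral). Your symmetry-plus-dominated-convergence argument is therefore a genuinely self-contained and more elementary alternative, and it has the merit of making transparent why the second limit is $\frac12$ rather than $\frac14$, i.e.\ why the region $|x|\approx{\rm e}^{-\alpha}$ contributes as much as $|x|\approx 1$; the reference's approach, by contrast, yields quantitative rates, which your qualitative limit argument does not provide but which are not needed here.
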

Finally, in view of the existence of a positive constant $C$ such taht $\|\eta_{\alpha}\|_{L^\infty}\leq \frac{C}{\sqrt{\alpha}}$ and as $\varphi$ is a smooth compactly supported function, we get
\begin{equation*}
    N_\alpha\underset{\alpha\rightarrow \infty}{\longrightarrow} 0,
\end{equation*}
which achieves the proof of the proposition.
\subsection{Statement of the results}
 Before entering into the details, let us introduce some definitions
as in \cite{Bahouri} and \cite{Gerard}.
\begin{defi} We shall designate by a scale any sequence $\alpha:= (\alpha_n)$ of positive real numbers going to
infinity. Two scales $\alpha$ and $\beta$ are said orthogonal if
\begin{equation*}
    \Big{|}\log\Big(\frac{\beta_n}{\alpha_n}\Big)\Big{|}\rightarrow \infty.
\end{equation*}
The set of profiles is
 \begin{equation*}
    \mathcal{P}:=\Big\{\psi\in L^2(\R,{\rm{e}}^{-4s}ds);\quad\psi'\in L^2(\R),\quad\psi_{|]-\infty,0]}=0\Big\}.
\end{equation*}
\end{defi}
\begin{rem}
 The profiles belong to the H\"older space $C^\frac{1}{2}$. Indeed, for any profile $\psi$ and real numbers $s$ and $t$, we have by Cauchy-Schwarz inequality
$$|\psi(s)-\psi(t)|=\left|\int_s^t\psi'(\tau)\;d\tau\right|\leq\|\psi'\|_{L^2(\R)}|s-t|^\frac{1}{2}.$$
\end{rem}
Our main goal is to establish that the characterization of the lack of compactness of critical Sobolev embedding
\begin{equation*}
    H^2_{rad}(\mathbb{R}^4)\hookrightarrow \mathcal{L}(\mathbb{R}^4)
\end{equation*}
can be reduced to the example \eqref{exem}. In fact, we can decompose the function $f_\alpha$ as follows:
$$f_\alpha(x)=\sqrt{\Frac{\alpha}{8\pi^2}}L\Big(-\Frac{\log|x|}{\alpha}\Big)+r_\alpha(x),$$
where
\begin{equation*}
 L(t)=\left\{%
\begin{array}{lll}
1 & \hbox{if\quad $t\geq 1$} \\
t &\hbox{if\quad $0\leq t<1$}\\
0 &\hbox{if\quad $t< 0$}
\end{array}%
\right.
\end{equation*}
and
\begin{equation*}
 r_\alpha(x)=\left\{%
\begin{array}{lll}
\frac{1-|x|^2{\rm e}^{2\alpha}}{\sqrt{32\pi^2\alpha}} & \hbox{if\quad $|x|\leq {\rm e}^{-\alpha}$} \\
0 &\hbox{if\quad ${\rm e}^{-\alpha}<|x|\leq 1$}\\
\eta_\alpha(x) &\hbox{if\quad $|x|> 1$.}
\end{array}%
\right.
\end{equation*}
The sequence $\alpha$ is a scale, the function $L$ is a profile and the function $r_\alpha$ is called the remainder term.\\
We can easily see that $r_\alpha\underset{\alpha\rightarrow\infty}\longrightarrow 0$ in $\mathcal{L}.$ Indeed, for all $\lambda>0$, we have
\begin{eqnarray*}
\Int_{|x|\leq {\rm e}^{-\alpha}}\Big({\rm e}^\frac{|r_\alpha(x)|^2}{\lambda^2}-1\Big)\;dx
&\leq& 2\pi^2\Int_0^{{\rm e}^{-\alpha}}\Big({\rm e}^\frac{1+r^4{\rm e}^{4\alpha}}{16\pi^2\alpha\lambda^2}-1\Big)r^3\;dr\\
&\leq& \Big[8\pi^4\lambda^2{\rm e}^{\frac{1}{16\pi^2\alpha\lambda^2}}\alpha{\rm e}^{-4\alpha}\Big({\rm e}^{\frac{1}{16\pi^2\alpha\lambda^2}}-1\Big)-\frac{\pi^2{\rm e}^{-4\alpha}}{2}\Big]\underset{\alpha\rightarrow\infty}\longrightarrow 0.
\end{eqnarray*}
Moreover, since $\eta$ belongs to $\mathcal{D}(\R^4)$ and satisfies $\|\eta_{\alpha}\|_{L^\infty}\leq \frac{C}{\sqrt{\alpha}}$ for some $C>0$, we get
$$\Int_{|x|> 1}\Big({\rm e}^\frac{|r_\alpha(x)|^2}{\lambda^2}-1\Big)\;dx\underset{\alpha\rightarrow\infty}\longrightarrow 0.$$
Let us observe that $h_\alpha(x):=\sqrt{\Frac{\alpha}{8\pi^2}}L\Big(-\Frac{\log|x|}{\alpha}\Big)$ does not belong to $H^2(\R^4)$. To overcome this difficulty, we shall convolate the profile $L$ with an approximation to the identity $\rho_n$ where $\rho_n(s)=\alpha_n\rho(\alpha_n s)$ with $\rho$ is a positive smooth compactly supported function satisfying
 \begin{equation}\label{rho}
 \mbox{supp}\,\rho\subset[-1,1]\quad\mbox{ and}
\end{equation}
\begin{equation}\label{int rho}
 \Int_{-1}^1\rho(s)\;ds=1.
\end{equation}
 More precisely, we shall prove that the lack of compactness can be described in terms of an asymptotic decomposition as follows:
\begin{thm}\label{th} Let $(u_n)_n$ be a bounded sequence in $H_{rad}^2(\mathbb{R}^4)$ such that
\begin{eqnarray}
% \nonumber to remove numbering (before each equation)
  u_n &\underset{n\rightarrow\infty}\rightharpoonup& 0,\label{hyp1}\\
  \underset{n\rightarrow \infty}{\limsup}\,\|u_n\|_{\mathcal{L}} &=&
A_0>0,\quad\mbox{and}\label{hyp2}\\
  \underset{R\rightarrow \infty}{\lim}\underset{n \rightarrow \infty}{\limsup}\int_{|x|>R}|u_n(x)|^2 \;dx&=& 0\label{hyp3}.
\end{eqnarray}
Then, there exists a sequence $(\alpha^{(j)})$ of pairwise orthogonal scales and a sequence of profiles $(\psi^{(j)})$ in $\mathcal{P}$ such that up to a subsequence extraction, we have for all $\ell\geq 1$
\begin{equation}\label{decomposition}
  u_n(x)=\sum_{j=1}^{\ell}\sqrt{\frac{\alpha_n^{(j)}}{8\pi^2}}\big(\psi^{(j)}*\rho_n^{(j)}\big)\left(\frac{-\log|x|}{\alpha_{n}^{(j)}}\right)+
r_n^{(\ell)}(x),
\end{equation}
where $\rho_n^{(j)}(s)=\alpha_n^{(j)}\rho(\alpha_n^{(j)}s)$ and $\Limsup_{n\rightarrow \infty}\big\|r_n^{(\ell)}\big\|_{\mathcal{L}}\overset{\ell\rightarrow \infty}{\longrightarrow}0$.
\end{thm}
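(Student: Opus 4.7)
The approach follows the blueprint of Bahouri--Majdoub--Masmoudi's treatment of $H^1_{rad}(\R^2)\hookrightarrow \mathcal{L}(\R^2)$ in \cite{Bahouri}, now in the biharmonic, four-dimensional setting. The first preparatory task is to translate radial $H^2$-functions into one-variable functions via the substitution $s=-\log|x|/\alpha$. A direct calculation shows that a generalised Moser function $h_\alpha(x):=\sqrt{\alpha/(8\pi^2)}\,\psi(-\log|x|/\alpha)$, $\psi\in\mathcal{P}$, satisfies
\[
\|\Delta h_\alpha\|_{L^2(\R^4)}^2=\|\psi'\|_{L^2(\R)}^2+O(1/\alpha^2),\qquad \|h_\alpha\|_{L^2(\R^4)}^2\sim \frac{1}{\alpha}\|\psi\|_{L^2(\R,\,e^{-4s}ds)}^2.
\]
These identities simultaneously justify the definition of $\mathcal{P}$ (the weight $e^{-4s}$ and the condition $\psi'\in L^2$) and force the convolution with $\rho_n$: without a regularisation, the $\psi''$ contribution to $\Delta h_\alpha$ would not make classical sense for a merely distributional $\psi''$.

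\textit{Extraction of the first scale and profile.} Assuming $\Limsup_n\|u_n\|_\mathcal{L}=A_0>0$, I would combine the sharp Adams--Ruf--Sani inequality \eqref{2} with the decay assumption \eqref{hyp3} to locate, along a subsequence, points $x_n$ with $|x_n|\to 0$ and scales $\alpha_n^{(1)}:=-\log|x_n|\to\infty$ at which $\sqrt{8\pi^2/\alpha_n^{(1)}}\,|u_n(x_n)|\geq c(A_0)>0$. Setting $v_n(s):=\sqrt{8\pi^2/\alpha_n^{(1)}}\,u_n(e^{-\alpha_n^{(1)}s})$, show that $(v_n)$ is bounded in a weighted Sobolev space continuously embedded in $\mathcal{P}$, extract a weakly convergent subsequence with nonzero limit $\psi^{(1)}\in\mathcal{P}$, and define
\[
r_n^{(1)}(x):=u_n(x)-\sqrt{\alpha_n^{(1)}/(8\pi^2)}\,\bigl(\psi^{(1)}*\rho_n^{(1)}\bigr)\!\left(-\log|x|/\alpha_n^{(1)}\right).
\]
One then verifies that $r_n^{(1)}\rightharpoonup 0$ in $H^2$ and still satisfies \eqref{hyp3}, so that the procedure may be iterated on $r_n^{(1)}$.

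\textit{Iteration and termination.} If $\Limsup_n\|r_n^{(1)}\|_\mathcal{L}>0$, re-apply the previous step to $r_n^{(1)}$ to obtain $(\alpha_n^{(2)},\psi^{(2)})$, and continue. Orthogonality $|\log(\alpha_n^{(j)}/\alpha_n^{(k)})|\to\infty$ for $j\neq k$ is forced: if some ratio stayed bounded, the previously extracted profile, pulled back under the new scale, would admit a nonzero weak limit, contradicting the construction of the current remainder. Expanding $\|\Delta u_n\|_{L^2}^2$ through the decomposition \eqref{decomposition} and using this orthogonality yields the asymptotic Pythagorean identity
\[
\Limsup_{n\to\infty}\|u_n\|_{H^2}^2\;\geq\;\sum_{j=1}^{\ell}\|{\psi^{(j)}}'\|_{L^2}^2\;+\;\Limsup_{n\to\infty}\|r_n^{(\ell)}\|_{H^2}^2.
\]
The $H^2$-boundedness of $(u_n)$ forces $\|{\psi^{(j)}}'\|_{L^2}\to 0$, and a quantitative version of the first-extraction lemma (controlling $\Limsup_n\|r_n^{(\ell)}\|_\mathcal{L}$ by the $L^2$-size of ${\psi^{(\ell+1)}}'$) then gives $\Limsup_n\|r_n^{(\ell)}\|_\mathcal{L}\to 0$ as $\ell\to\infty$.

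\textit{Main obstacle.} The crucial difficulty lies in the first extraction: proving, \emph{quantitatively}, that a nonzero Orlicz limsup forces genuine concentration along some scale together with a nonzero weak profile in $\mathcal{P}$. This requires the 4D counterpart of Bahouri's Proposition~2.1, combining the radial pointwise estimate bounding $|u(r)|$ by a logarithm of $1/r$ times an $H^2_{rad}$-norm with the sharp inequality \eqref{2}--\eqref{3}, so that outside the concentration region the Orlicz integrand is dominated by the $L^2$-integrand. A secondary technical point is showing that the mollification by $\rho_n$ preserves the asymptotic identities relating $\|h_\alpha\|_{H^2}$ to $\|\psi'\|_{L^2}$ and does not affect the Orlicz contribution; this will be controlled by \eqref{rho}--\eqref{int rho} and the hypothesis $\psi'\in L^2(\R)$.
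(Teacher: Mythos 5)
Your proposal reproduces the overall architecture of the paper's proof (logarithmic change of variable, extraction of a scale, profile obtained as a weak limit of the derivatives of the rescaled functions, subtraction of the mollified profile, iteration with an energy decrement), but it leaves unproved exactly the step on which everything hinges, and which you yourself label ``the main obstacle''. The paper's proof devotes its core (Proposition 2.3, Corollaries 2.4--2.5, Lemma 2.6) to the quantitative extraction: writing $v_n(s)=u_n({\rm e}^{-s})$, one shows by contradiction that $\sup_{s\geq 0}\big(|v_n(s)/(A_0-\delta)|^2-3s\big)\to\infty$ for every $0<\delta<A_0$. The mechanism is elementary and does not really rest on the sharp Adams inequality \eqref{2} as you suggest: one first gets $u_n\to 0$ strongly in $L^2$ (from \eqref{hyp1}, \eqref{hyp3} and Rellich) and $v_n\to 0$ uniformly on $]-\infty,M[$ via the radial estimate $|u(x)|\lesssim |x|^{-3/2}\|u\|_{L^2}^{1/2}\|\nabla u\|_{L^2}^{1/2}$; if the above supremum stayed bounded, the domination ${\rm e}^{|v_n/(A_0-\delta)|^2}{\rm e}^{-4s}\lesssim {\rm e}^{-s}$ and Lebesgue's theorem would give $\lim\|u_n\|_{\mathcal L}\leq A_0-\delta$, contradicting \eqref{hyp2}. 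This produces a scale with $|v_n(\alpha_n^{(1)})|\geq \frac{\sqrt 3}{2}A_0\sqrt{\alpha_n^{(1)}}$, and then, since $\psi_n(s)=\psi_n(0)+\int_0^s\psi_n'(\tau)d\tau$ with $\|\psi_n'\|_{L^2}\leq\|\Delta u_n\|_{L^2}$ (Lemma 3.3), the weak limit of $(\psi_n')$ integrates to a profile satisfying the crucial lower bound $\|(\psi^{(1)})'\|_{L^2}\geq\sqrt{6\pi^2}\,A_0$. Your phrase ``extract a weakly convergent subsequence with nonzero limit'' hides this: a weak limit can perfectly well vanish, and the non-vanishing, with a bound proportional to $A_0$, is obtained only through the pointwise convergence $\psi_n(1)\to\psi^{(1)}(1)$ combined with the concentration estimate above. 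Without this quantitative lemma your termination step has nothing to run on, so the conclusion $\limsup_n\|r_n^{(\ell)}\|_{\mathcal L}\to 0$ is not justified as written.

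A secondary, fixable divergence: the paper does not expand the full $H^2$ (or $\|\Delta\cdot\|_{L^2}$) norm along the decomposition. It tracks instead the quantity $\|\tfrac1r\partial_r\cdot\|_{L^2(\R^4)}^2=\tfrac14\|\psi_n'\|_{L^2(\R)}^2$, which decreases by exactly $\tfrac14\|(\psi^{(j)})'\|_{L^2}^2$ at each step as a direct consequence of the weak convergence $\psi_n'\rightharpoonup(\psi^{(j)})'$ and the strong convergence $(\psi^{(j)})'*\rho_n^{(j)}\to(\psi^{(j)})'$; being controlled by $\|\Delta u_n\|_{L^2}$ via Lemma 3.3, its telescoping gives $\sum_j\|(\psi^{(j)})'\|_{L^2}^2<\infty$ and hence $A_\ell\to 0$. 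Your proposed asymptotic Pythagorean identity for $\|u_n\|_{H^2}^2$ would additionally require showing the vanishing of cross terms between $\Delta g_n^{(j)}$ at different scales and between $\Delta g_n^{(j)}$ and $\Delta r_n^{(\ell)}$ (note $\Delta g_n^{(j)}$ carries an extra term $\tfrac1{2\alpha_n^{(j)}}\psi'*(\rho_n^{(j)})'$), which you assert but do not verify; the paper's step-by-step decrement avoids this entirely.
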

\begin{rems}
{\bf a)} As in \cite{Gerard}, the decomposition \eqref{decomposition} is not unique.\\
{\bf b)} The assumption \eqref{hyp3} means that there is no lack of compactness at
infinity. It is in particularly satisfied when the
sequence $(u_n)$ is supported in a fixed compact
of $\R^4$ and also by the sequences 
\begin{equation}\label{gnj}
g_n^{(j)}(x):=\sqrt{\frac{\alpha_n^{(j)}}{8\pi^2}}\big(\psi^{(j)}*\rho_n^{(j)}\big)\left(\frac{-\log|x|}{\alpha_{n}^{(j)}}\right)
\end{equation}
involved in the decomposition \eqref{decomposition}.\\
{\bf c)} As it is mentioned above, the functions $h_n^{(j)}(x):=\sqrt{\frac{\alpha_n^{(j)}}{8\pi^2}}\psi^{(j)}\left(\frac{-\log|x|}{\alpha_{n}^{(j)}}\right)$ do not belong to $H^2(\R^4)$. However, we have 
\begin{equation}\label{gn hn}
\big\|g_n^{(j)}-h_n^{(j)}\big\|_{\mathcal{L}(\R^4)}\underset{n\rightarrow\infty}\longrightarrow 0,
\end{equation}
where the functions $g_n^{(j)}$ are defined by \eqref{gnj}. Indeed, by the change of variable $s=-\frac{\log |x|}{\alpha_n^{(j)}}$ and using the fact that, for any integer number $j$, $\psi^{(j)}*\rho^{(j)}_n$ is supported in $[-\frac{1}{\alpha_n^{(j)}},\infty[$ and $\psi^{(j)}$ is supported in $[0,\infty[$, we infer that for all $\lambda>0$
$$\Int_{\R^4}\Big({\rm e}^{\big|\frac{g_n^{(j)}(x)-h_n^{(j)}(x)}{\lambda}\big|^2}-1\Big)\; dx=2\pi^2\alpha_n^{(j)}\Int_{-\frac{1}{\alpha_n^{(j)}}}^\infty\Big({\rm e}^{\frac{\alpha_n^{(j)}}{8\pi^2\lambda^2}\big|\big(\psi^{(j)}*\rho_n^{(j)}\big)(s)-\psi^{(j)}(s)\big|^2}-1\Big){\rm e}^{-4\alpha_n^{(j)}s}\; ds.$$
Since
$$\big|\big(\psi^{(j)}*\rho_n^{(j)}\big)(s)-\psi^{(j)}(s)\big|\leq\Int_{-1}^1\Big|\psi^{(j)}\Big(s-\frac{t}{\alpha_n^{(j)}}\Big)-\psi^{(j)}(s)\Big|\rho(t)\;dt,$$
we obtain, according to Cauchy-Schwarz inequality,
\begin{eqnarray*}
\big|\big(\psi^{(j)}*\rho_n^{(j)}\big)(s)-\psi^{(j)}(s)\big|^2&\lesssim&\Int_{-1}^1 \Big|\psi^{(j)}\Big(s-\frac{t}{\alpha_n^{(j)}}\Big)-\psi^{(j)}(s)\Big|^2\;dt\\
&\lesssim&\alpha_n^{(j)}\Int_{-\frac{1}{\alpha_n^{(j)}}}^\frac{1}{\alpha_n^{(j)}}\big|\psi^{(j)}(s-\tau)-\psi^{(j)}(s)\big|^2\;d\tau\\
&\lesssim& \alpha_n^{(j)}\Int_{-\frac{1}{\alpha_n^{(j)}}}^\frac{1}{\alpha_n^{(j)}}\Big(\Int_{s-\tau}^s\big|\big(\psi^{(j)}\big)'(u)\big|\;du\Big)^2\;d\tau.
\end{eqnarray*}
Applying again Cauchy-Schwarz inequality, we get
\begin{eqnarray*}
\big|\big(\psi^{(j)}*\rho_n^{(j)}\big)(s)-\psi^{(j)}(s)\big|^2&\lesssim&\alpha_n^{(j)}\Int_{-\frac{1}{\alpha_n^{(j)}}}^\frac{1}{\alpha_n^{(j)}}\Big(\Int_{s-\tau}^s\big|\big(\psi^{(j)}\big)'(u)\big|^2\;du\Big)|\tau|\;d\tau\\
&\lesssim& \frac{1}{\alpha_n^{(j)}}\Sup_{|\tau|\leq\frac{1}{\alpha_n^{(j)}}}\Int_{s-\tau}^s\big|\big(\psi^{(j)}\big)'(u)\big|^2\;du.
\end{eqnarray*}
Then, there exists a positive constant $C$ such that
\begin{eqnarray*}
\Int_{\R^4}\Big({\rm e}^{\big|\frac{g_n^{(j)}(x)-h_n^{(j)}(x)}{\lambda}\big|^2}-1\Big)\; dx
&\lesssim&\alpha_n^{(j)}\Int_{-\frac{1}{\alpha_n^{(j)}}}^\infty\left({\rm e}^{\frac{C}{\lambda^2}\underset{|\tau|\leq\frac{1}{\alpha_n^{(j)}}}{\sup}\int_{s-\tau}^s|(\psi^{(j)})'(u)|^2\;du}-1\right){\rm e}^{-4\alpha_n^{(j)}  s}\;ds\\
&\lesssim& I_n+J_n,
\end{eqnarray*}
where 
\begin{eqnarray*}
I_n&=&\alpha_n^{(j)}\Int_{s_0}^\infty\left({\rm e}^{\frac{C}{\lambda^2}\underset{s\in[s_0,\infty[,|\tau|\leq\frac{1}{\alpha_n^{(j)}}}{\sup}\int_{s-\tau}^s|(\psi^{(j)})'(u)|^2\;du}-1\right){\rm e}^{-4\alpha_n^{(j)}  s}\;ds\quad \mbox{and}\\
J_n&=&\alpha_n^{(j)}\Int_{-\frac{1}{\alpha_n^{(j)}}}^{s_0}\left({\rm e}^{\frac{C}{\lambda^2}\underset{s\in[-\frac{1}{\alpha_n^{(j)}},{s_0}],|\tau|\leq\frac{1}{\alpha_n^{(j)}}}{\sup}\int_{s-\tau}^s|(\psi^{(j)})'(u)|^2\;du}-1\right){\rm e}^{-4\alpha_n^{(j)}  s}\;ds,
\end{eqnarray*}
for some positive real $s_0$.\\
Noticing that
$$I_n\lesssim\left({\rm e}^{\frac{C\left\|\left(\psi^{(j)}\right)'\right\|_{L^2(\R)}^2}{\lambda^2}}-1\right)\frac{{\rm e}^{-4\alpha_n^{(j)}s_0}}{4},$$
we infer that
$$\Lim_{n\rightarrow\infty}I_n=0.$$
Moreover, the fact that
$$C_n:=C\,\Sup_{s\in[-\frac{1}{\alpha_n^{(j)}},s_0],|\tau|\leq\frac{1}{\alpha_n^{(j)}}}\Int_{s-\tau}^s\big|\big(\psi^{(j)}\big)'(u)\big|^2\;du\\
\underset{n\rightarrow\infty}\longrightarrow 0,$$
implies that 
$$\Lim_{n\rightarrow\infty}J_n=\Lim_{n\rightarrow\infty}\Big({\rm e}^\frac{C_n}{\lambda^2}-1\Big)\frac{{\rm e}^4-{\rm e}^{-4\alpha_n^{(j)}s_0}}{4}= 0.$$
This leads to \eqref{gn hn} as desired.\\
{\bf d)} Similarly to the proof of Proposition 1.15 in \cite{Bahouri}, we get by using \eqref{gn hn} 
$$\Lim_{n\rightarrow\infty}\big\|g_n^{(j)}\big\|_{\mathcal{L}(\R^4)}=\Lim_{n\rightarrow\infty}\big\|h_n^{(j)}\big\|_{\mathcal{L}(\R^4)}=\Frac{1}{\sqrt{32\pi^2}}\Max_{s>0}\Frac{\big|\psi^{(j)}(s)\big|}{\sqrt{s}}.$$
{\bf e)} Setting $\tilde{g}_n(x):=\sqrt{\frac{\alpha_n^{(j)}}{8\pi^2}}\big(\psi^{(j)}*\tilde{\rho}_n^{(j)})\Big(\frac{-\log|x|}{\alpha_n^{(j)}}\Big),$
where $\tilde{\rho}_n^{(j)}(s)=\alpha_n^{(j)}\tilde{\rho}\big(\alpha_n^{(j)}s\big)$ with $\tilde{\rho}$ is a positive smooth compactly supported function satisfying \eqref{rho} and \eqref{int rho}, we notice that
\begin{equation}\label{g_n}
\big\|g_n^{(j)}-\tilde{g}_n^{(j)}\big\|_{\mathcal{L}(\R^4)}\underset{n\rightarrow\infty}\longrightarrow 0,
\end{equation}
where the functions $g_n^{(j)}$ are defined by \eqref{gnj}. To prove \eqref{g_n}, we apply the same lines of reasoning of the proof of \eqref{gn hn}.\\ 
{\bf f)} Compared with the decomposition in \cite{Gerard}, it can be seen that there's no core in \eqref{decomposition}. This is justified by the radial setting.
\end{rems}
%%%%%%%%%%%%%%%%%%%%%%%%%%%%%%%%%%%%%%%%%%%%%%%%%%%%%%%%
 Theorem \ref{th} induces to
$$\|u_n\|_{\mathcal{L}}\rightarrow\Sup_{j\geq 1}\Big(\underset{n\rightarrow \infty}{\lim}\big\|g_n^{(j)}\big\|_{\mathcal{L}}\Big).$$
This is due to the following proposition proved in \cite{Bahouri}.
\begin{prop}\label{conv.prop}
\label{sumOrlicz} Let $({\alpha}^{(j)})_{1\leq
j\leq\ell}$ be a family of pairwise orthogonal scales and
$(\psi^{(j)})_{1\leq j\leq\ell}$ be a family of profiles, and set
\begin{eqnarray*}
  % \nonumber to remove numbering (before each equation)
    g_n(x)&=&\sum_{j=1}^{\ell}\,\sqrt{\frac{\alpha_n^{(j)}}{8\pi^2}}\;\big(\psi^{(j)}*\rho_n^{(j)}\big)\left(\frac{-\log|x|}{\alpha_n^{(j)}}\right):=\sum_{j=1}^{\ell}\,g_n^{(j)}(x)\;.
  \end{eqnarray*}
 Then
\begin{equation*}
 \|g_n\|_{\mathcal{L}}\to \Sup_{1\leq
j\leq\ell}\,\left(\lim_{n\to\infty}\,\big\|g_n^{(j)}\big\|_{\mathcal{L}}\right).
\end{equation*}
\end{prop}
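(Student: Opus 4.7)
The plan is to establish matching inequalities $\limsup_n \|g_n\|_{\mathcal{L}} \leq M$ and $\liminf_n \|g_n\|_{\mathcal{L}} \geq M$, where $M := \sup_{1\leq j\leq \ell}\bigl(\lim_n \|g_n^{(j)}\|_{\mathcal{L}}\bigr)$. The driving mechanism is pairwise orthogonality of scales, which forces asymptotic disjointness of the concentration regions of the $g_n^{(j)}$. To exploit it, I first pass to radial one-dimensional coordinates via $t=-\log|x|$, which yields, for every $\lambda>0$,
\[
\int_{\R^4}\phi\Big(\tfrac{|g_n(x)|}{\lambda}\Big)\,dx \;=\; 2\pi^2\int_{\R}\phi\Big(\tfrac{|G_n(t)|}{\lambda}\Big)\,e^{-4t}\,dt,
\]
with $G_n(t)=\sum_j G_n^{(j)}(t)$ and $G_n^{(j)}(t)=\sqrt{\alpha_n^{(j)}/(8\pi^2)}\,(\psi^{(j)}*\rho_n^{(j)})(t/\alpha_n^{(j)})$. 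Each $G_n^{(j)}$ is supported in $\{t\geq -1\}$, and its weighted contribution concentrates near a point $t_n^{(j)}(\lambda)$ proportional to $\alpha_n^{(j)}$; orthogonality $|\log(\alpha_n^{(j)}/\alpha_n^{(k)})|\to\infty$ forces these points to be asymptotically incomparable.

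For the upper bound, fix $\lambda>M$ and use the single-scale calculation of Remark~d) to obtain $\int\phi(|g_n^{(j)}|/\lambda)\,dx\to 0$ for every $j$. I would then partition $\R$ into intervals $I_n^{(1)},\dots,I_n^{(\ell)}$ centered at the points $t_n^{(j)}(\lambda)$, together with a complement on which every $|G_n^{(k)}|$ is negligible after weighting by $e^{-4t}$. Using the H\"older estimate $|\psi^{(k)}(s)|\leq\sqrt{s}\,\|(\psi^{(k)})'\|_{L^2(\R)}$ from the remark following the definition of $\mathcal{P}$, together with the orthogonality of scales, one verifies $\sum_{k\neq j}|G_n^{(k)}(t)|=o(1)$ uniformly on $I_n^{(j)}$. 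Monotonicity of $\phi$ and absorption of the $o(1)$ factor into a slight reduction of $\lambda$ (still larger than $M$) give
\[
\int_{I_n^{(j)}}\phi(|G_n|/\lambda)\,e^{-4t}\,dt \;\leq\; \int_{\R}\phi(|G_n^{(j)}|/\lambda')\,e^{-4t}\,dt \xrightarrow[n\to\infty]{} 0.
\]
Summing the finitely many $j$ yields $\int\phi(|g_n|/\lambda)\,dx\to 0\leq\kappa$, hence $\|g_n\|_{\mathcal{L}}\leq\lambda$; letting $\lambda\downarrow M$ concludes.

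For the lower bound, let $j_0$ attain $M$ and fix $\lambda<M$. Remark~d) gives $\int\phi(|g_n^{(j_0)}|/\lambda)\,dx\to\infty$, and the same asymptotic disjointness shows $|G_n(t)|\geq|G_n^{(j_0)}(t)|-o(1)$ uniformly on $I_n^{(j_0)}$; hence $\int_{I_n^{(j_0)}}\phi(|G_n|/\lambda)\,e^{-4t}\,dt$ still exceeds $\kappa$ for $n$ large, so $\|g_n\|_{\mathcal{L}}\geq\lambda$, and $\lambda\uparrow M$ closes the inequality.

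The main obstacle is the localization step: checking, uniformly on $I_n^{(j)}$, that $\sum_{k\neq j}|G_n^{(k)}(t)|$ is of strictly lower order than the values of $|G_n^{(j)}|$ responsible for the Orlicz norm. This combines (i) the support of $\psi^{(k)}*\rho_n^{(k)}$, (ii) the H\"older control on profiles from the remark after the definition of $\mathcal{P}$, and (iii) a quantitative use of $|\log(\alpha_n^{(j)}/\alpha_n^{(k)})|\to\infty$ ensuring that the rescaled variable $t/\alpha_n^{(k)}$ either exits a fixed bounded window or lands in a region where the exponential weight $e^{-4t}$ defeats the growth of $\phi$. This adapts the corresponding step of the two-dimensional argument in \cite{Bahouri} (where the weight is $e^{-2t}$) to the four-dimensional setting.
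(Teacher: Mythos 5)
The paper itself offers no proof of this proposition: it is quoted as established in \cite{Bahouri} (the 2D analogue, Orlicz norm of a sum over orthogonal scales), so your attempt has to be judged as an adaptation of that argument. Your overall architecture is indeed the right one: pass to $t=-\log|x|$, prove the two inequalities separately, use the single-scale computation behind Remark d) for each $g_n^{(j)}$, and use orthogonality of scales to decouple the pieces on a partition of the $t$-axis. However, the pivotal localization claim as you state it, namely that $\sum_{k\neq j}|G_n^{(k)}(t)|=o(1)$ uniformly on $I_n^{(j)}$, is false, and the subsequent ``absorb the $o(1)$ into a slight reduction of $\lambda$'' step leans on it. Concretely, take $\ell=2$, $\psi^{(2)}=L$ (the Moser-type profile of the paper), $\alpha_n^{(1)}=n$, $\alpha_n^{(2)}=n\log n$; these scales are orthogonal, yet on the concentration region of the first piece, where $t\asymp\alpha_n^{(1)}$, one has $|G_n^{(2)}(t)|\approx t/\sqrt{8\pi^2\,\alpha_n^{(2)}}\asymp\sqrt{n/\log n}\to\infty$. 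The foreign contributions are never uniformly $o(1)$; what orthogonality actually gives is relative smallness, $|G_n^{(k)}(t)|\leq\varepsilon_n\sqrt{t}+C$ with $\varepsilon_n\to0$ on $I_n^{(j)}$, obtained not from the crude H\"older bound $|\psi^{(k)}(s)|\leq\sqrt{s}\,\|(\psi^{(k)})'\|_{L^2}$ (which only yields a term of the same order $\sqrt{t}$ as the main one) but from the smallness of $\int_0^{s_0}|(\psi^{(k)})'|^2$ as $s_0\to0$ and of $\int_{s_0}^{\infty}|(\psi^{(k)})'|^2$ as $s_0\to\infty$.

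With that corrected bound the absorption is no longer a shift of $\lambda$ by an additive $o(1)$: you must split $|G_n|^2\leq(1+\delta)|G_n^{(j)}|^2+C_\delta\,\varepsilon_n^2\,t$ and absorb the term $C_\delta\varepsilon_n^2 t/\lambda^2$ into the weight ${\rm e}^{-4t}$, which in turn forces you to rerun the single-profile computation with the perturbed exponent $(1+\delta)|G_n^{(j)}|^2/\lambda^2-(4-\delta)t$ and to use the strict margin $\lambda>M$; the statement of Remark d) alone (a limit of norms) does not give the needed facts that $\int\phi(|g_n^{(j)}|/\lambda)\,dx\to0$ for $\lambda>M$ and $\to\infty$ for $\lambda<M$ — those come from the quantitative proof of Proposition 1.15 in \cite{Bahouri}, which you must invoke or redo in the 4D setting. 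A symmetric correction is needed in your lower bound, where $|G_n|\geq|G_n^{(j_0)}|-\varepsilon_n\sqrt{t}$ must be combined with the fact that $|G_n^{(j_0)}|\gtrsim\sqrt{t}$ on the concentration region. You do flag this localization as the main obstacle and your closing paragraph points at the right ingredients, but as written the key estimate is wrong and the proof is not yet complete; repaired along the lines above it becomes the standard adaptation of the argument the paper cites.
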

\subsection{Structure of the paper} The paper is organized as follows: Section 2 is devoted to the proof of Theorem \ref{th} by describing the algorithm construction of the decomposition of a bounded sequence $(u_n)$ in $H^2_{rad}(\R^4)$, up a subsequence extraction, in terms of orthogonal profiles. In the last section, we deal with several complements for the sake of completeness.\\
We mention that $C$ will be used to denote a constant which may vary from line to line. We also use $A \lesssim B$ to denote an estimate of the form $A \leq CB$ for some absolute constant $C$ and
$A \approx B$ if $A \lesssim B$ and $B \lesssim A$. For simplicity, we shall also still denote by $(u_n)$ any subsequence
of $(u_n)$.
%%%%%%%%%%%%%%%%%%%%%%%%%%%%%%%%%%%%%%%%%%%%%%%%%%%%%%%%%%%%%%%%%%%%%%%%%%%%%%%%%%%%%%%%%%%%%%%%%%%%%
\section{Proof of the main theorem}
\subsection{Scheme of the proof}
The first step of the proof is based on the extraction of the first scale and the first profile. As in \cite{Bahouri}, the heart of the matter is reduced to the proof of the following lemma:
\begin{lem}
Let $(u_n)$ be a sequence in $H^2_{rad}(\R^4)$ satisfying the assumptions of Theorem \ref{th}. Then there exists a scale $(\alpha_n)$ and a profile $\psi$ such that
\begin{equation}\label{key}
\|\psi'\|_{L^2(\R)}\geq C A_0,
\end{equation}
where $C$ is a universal constant.
\end{lem}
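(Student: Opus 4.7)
The strategy, following \cite{Bahouri}, is to perform the logarithmic change of variable $s=-\log|x|/\alpha$, which transforms radial $H^2(\R^4)$-functions into one-dimensional profiles, and to extract the scale $\alpha_n$ from the location of the Orlicz concentration of $u_n$. Two preliminary facts drive the analysis. First, hypotheses \eqref{hyp1} and \eqref{hyp3}, together with the local compactness of $H^2\hookrightarrow L^2_{\rm loc}$, force $u_n\to 0$ strongly in $L^2(\R^4)$; applied with \eqref{3} to $u_n/\|\Delta u_n\|_{L^2}$, this yields, for every $\beta<32\pi^2$,
\[
\Int_{\R^4}\Bigl({\rm e}^{\beta|u_n(x)|^2/\|\Delta u_n\|_{L^2}^2}-1\Bigr)\,dx\longrightarrow 0.
\]
Second, by the definition of $\|\cdot\|_{\mathcal{L}}$ and \eqref{hyp2}, for any $\lambda<A_0$ one has along a subsequence
\[
\Int_{\R^4}\Bigl({\rm e}^{|u_n(x)|^2/\lambda^2}-1\Bigr)\,dx>\kappa.
\]
These two facts can only be reconciled if the Orlicz mass of $u_n$ arises from a critical logarithmic concentration near $|x|=0$.

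Using the radial pointwise bound $|u_n'(r)|\lesssim\|\Delta u_n\|_{L^2}/r$, which follows from the identity $r^3u_n'(r)=\Int_0^r\sigma^3\Delta u_n(\sigma)\,d\sigma$ and Cauchy-Schwarz, this concentration analysis produces a radius $r_n={\rm e}^{-\alpha_n}\in(0,1)$ with $\alpha_n\to\infty$ such that
\[
\sqrt{\Frac{8\pi^2}{\alpha_n}}\,|u_n(r_n)|\geq c\,A_0
\]
for some universal $c>0$; this $(\alpha_n)$ is the desired scale. Setting
\[
\psi_n(s):=\sqrt{\Frac{8\pi^2}{\alpha_n}}\,u_n\bigl({\rm e}^{-\alpha_n s}\bigr),
\]
so that $\psi_n(1)\geq cA_0$, the change of variable $r={\rm e}^{-\alpha_n s}$ yields
\[
\Int_\R|\psi_n'(s)|^2\,ds\;=\;8\pi^2\Int_0^\infty r\,|u_n'(r)|^2\,dr\;=\;4\Int_{\R^4}\Frac{|\nabla u_n(x)|^2}{|x|^2}\,dx,
\]
which is controlled by $\|u_n\|_{H^2(\R^4)}^2$ via the Hardy inequality in dimension $4$; hence $(\psi_n')$ is bounded in $L^2(\R)$. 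Likewise,
\[
\Int_\R|\psi_n(s)|^2\,{\rm e}^{-4\alpha_n s}\,ds\;=\;\Frac{4\,\|u_n\|_{L^2(\R^4)}^2}{\alpha_n^2}\;\longrightarrow\; 0,
\]
and since the weight ${\rm e}^{-4\alpha_n s}\geq 1$ on $s\leq 0$, we deduce $\psi_n\to 0$ in $L^2(-\infty,0]$.

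Extracting a weakly convergent subsequence $\psi_n'\rightharpoonup\phi$ in $L^2(\R)$ and setting $\psi(s):=0$ for $s\leq 0$ and $\psi(s):=\Int_0^s\phi(\tau)\,d\tau$ for $s>0$, one has $\psi\in\mathcal{P}$. The uniform H\"older estimate $|\psi_n(s)-\psi_n(t)|\leq\|\psi_n'\|_{L^2}|s-t|^{1/2}$ combined with the $L^2$-vanishing of $\psi_n$ on $(-\infty,0]$ yields, via Ascoli, that $\psi_n\to\psi$ locally uniformly on $\R$; in particular $|\psi(1)|\geq cA_0$. By Cauchy-Schwarz,
\[
cA_0\;\leq\;|\psi(1)|\;=\;\Bigl|\Int_0^1\phi(\tau)\,d\tau\Bigr|\;\leq\;\|\psi'\|_{L^2(\R)},
\]
which is \eqref{key} with $C=c$.

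The delicate point is ensuring that the extracted scale goes genuinely to infinity; ruling out concentration on bounded scales relies crucially on the combination of the strong $L^2$-vanishing of $u_n$ (from \eqref{hyp1}, \eqref{hyp3} and local compactness) and the sub-critical bound \eqref{3}. Without this combination, the analysis could produce a vanishing or trivial limit profile, and the lower bound \eqref{key} would be lost.
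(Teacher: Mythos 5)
Your proposal follows the same route as the paper in its outline: pass to $v_n(s)=u_n({\rm e}^{-s})$, find $\alpha_n\to\infty$ with $|v_n(\alpha_n)|\gtrsim A_0\sqrt{\alpha_n}$, rescale to $\psi_n(y)=\sqrt{8\pi^2/\alpha_n}\,v_n(\alpha_n y)$, take a weak $L^2$ limit of $\psi_n'$, and bound $\|\psi'\|_{L^2}$ below by $|\psi(1)|$ via Cauchy--Schwarz. The last part of your argument (boundedness of $\psi_n'$ through the Hardy-type control of $\|\frac1r\partial_r u_n\|_{L^2}$, vanishing of $\psi_n$ in $L^2(-\infty,0]$ from the weight ${\rm e}^{-4\alpha_n s}\geq1$, identification of the limit as a profile, and the final inequality) is correct and matches the paper, up to the minor point that Ascoli gives you convergence of a subsequence whose limit you must still identify with $\psi$.

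The genuine gap is the sentence ``this concentration analysis produces a radius $r_n={\rm e}^{-\alpha_n}$ with $\alpha_n\to\infty$ such that $\sqrt{8\pi^2/\alpha_n}\,|u_n(r_n)|\geq cA_0$''. This is the heart of the lemma and you never derive it from your two ``preliminary facts''; neither the subcritical Adams inequality \eqref{3} nor the pointwise bound $|u_n'(r)|\lesssim\|\Delta u_n\|_{L^2}/r$ (which only gives the linear growth $|v_n(s)|\lesssim o(1)+Cs$) is the mechanism that produces the scale. The paper's actual argument is a contradiction via dominated convergence: if for some $\delta>0$ one had $\Sup_{s\geq0}\big(|v_n(s)/(A_0-\delta)|^2-3s\big)\leq C$ along a subsequence, then ${\rm e}^{|v_n(s)/(A_0-\delta)|^2}{\rm e}^{-4s}\leq{\rm e}^{C}{\rm e}^{-s}$ is an integrable dominant, and since $v_n\to0$ uniformly on every half-line $(-\infty,M)$ --- a consequence of the radial estimate $|v_n(s)|\lesssim{\rm e}^{3s/2}\|u_n\|_{L^2}^{1/2}\|\nabla u_n\|_{L^2}^{1/2}$ together with the strong $L^2$ vanishing you established --- one gets $\Int_{|x|<1}\big({\rm e}^{|u_n/(A_0-\delta)|^2}-1\big)dx\to0$; the region $|x|\geq1$ is controlled by the uniform bound on $u_n$ there and $\|u_n\|_{L^2}\to0$, whence $\lim\|u_n\|_{\mathcal L}\leq A_0-\delta$, contradicting \eqref{hyp2}. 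One then chooses $\alpha_n$ as a near-maximizer of $W_n(s)=4|v_n(s)/A_0|^2-3s$, and the same uniform vanishing on bounded sets (not \eqref{3}) is what forces $\alpha_n\to\infty$. Note also that the constant $c$ in your lower bound cannot be arbitrary: the contradiction only yields $|v_n(\alpha_n)|^2\geq\frac34A_0^2\alpha_n$, i.e. $\|\psi'\|_{L^2}\geq\sqrt{6\pi^2}\,A_0$, and any such argument is confined to $c<\sqrt{32\pi^2}$ because of the ${\rm e}^{-4s}$ weight.
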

Then, the problem will be reduced to the study of the remainder term. If the limit of
its Orlicz norm is null we stop the process. If not, we prove that this remainder
term satisfies the same properties as the sequence start which allows us to apply the
lines of reasoning of the first step and extract a second scale and a second profile which verify the above key property \eqref{key}. By contradiction arguments, we get the
property of orthogonality between the two first scales. Finally, we prove that this process converges.
\subsection{Preliminaries}
To describe the lack of compactness of the Sobolev space $H^2_{rad}(\R^4)$ into the Orlicz space $\mathcal{L}(\mathbb{R}^4)$, we will make firstly the change of variable $s:=-\log r$ with $r=|x|$ and associate to any radial function $u$ on $\R^4$ a one
space variable function $v$ defined by $v(s) = u(\rm{e}^{-s})$. It follows that:
\begin{eqnarray}
\|u\|_{L^2(\R^4)}^2&=&2\pi^2 \int_{\mathbb{R}} {\rm{e}}^{-4s} |v(s)|^2\; ds\label{C1},\\
\Big{\|}\frac{{\partial} u}{{\partial} r}\Big{\|}_{L^2(\R^4)}^2&=&2\pi^2 \int_{\mathbb{R}} {\rm{e}}^{-2s}|v'(s)|^2 \;ds,\label{C2}\\
\Big{\|}\frac{1}{r} {\partial}_r u\Big{\|}_{L^2(\R^4)}^2&=&2\pi^2 \int_{\mathbb{R}}|v'(s)|^2\;ds \label{C4}\quad and \\
\|\Delta u\|_{L^2(\R^4)}^2&=&2\pi^2 \int_{\mathbb{R}} |-2 v'(s)+v''(s)|^2 \;ds.\label{C5}
\end{eqnarray}
The quantity \eqref{C4} will play a fondamental role in our main result. Moreover, for a scale $(\alpha_n)$ and a profile $\psi$ we define
\begin{equation*}  g_{n}(x):=\sqrt{\frac{\alpha_n}{8\pi^2}}\,(\psi*\rho_n)\left(\frac{-\log|x|}{\alpha_n}\right),
\end{equation*}
where $\rho_n(s)=\alpha_n\rho(\alpha_n s)$ with $\rho$ is a positive smooth compactly supported function satisfying \eqref{rho} and \eqref{int rho}.
Straightforward computations show that
        \begin{eqnarray}
           % \nonumber to remove numbering (before each equation)
             \|g_n\|_{L^2(\R^4)} &\lesssim&\alpha_n\left(\Int^{\infty}_{0}|\psi(s)|^2{\rm e}^{-4\alpha_n s}\;ds\right)^\frac{1}{2} \label{p1}, \\
             \Big{\|}\frac{\partial g_n}{\partial r}\Big{\|}_{L^2(\R^4)} &\lesssim&\left(\Int_{\R}|\psi'(s)|^2{\rm e}^{-2\alpha_n s}\;ds\right)^\frac{1}{2} ,\label{p2} \\
   \Big\|\frac{1}{r}\partial_rg_n\Big\|_{L^2(\R^4)}&\lesssim& \|\psi'\|_{L^2(\R)}\label{p4} \quad and\\
             \|\Delta g_n\|_{L^2(\R^4)} &\lesssim& \|\psi'\|_{L^2(\R)}.\label{p5}
\end{eqnarray}
Indeed, we have
\begin{eqnarray*}
\|g_n\|_{L^2(\R^4)}&=&\frac{\alpha_n}{2}\left(\Int_\R|(\psi*\rho_n)(s)|^2{\rm e}^{-4\alpha_n s}\;ds\right)^\frac{1}{2}\\
&=&\big\|\tilde{\psi}_n*\tilde{\rho}_n\big\|_{L^2(\R)},
\end{eqnarray*}
where $\tilde{\psi}_n(\tau)=\frac{\alpha_n}{2}\psi(\tau){\rm e}^{-2\alpha_n\tau}$ and $\tilde{\rho}_n(\tau)=\rho_n(\tau){\rm e}^{-2\alpha_n\tau}$. According to Young's inequality, we get
 $$\|g_n\|_{L^2(\R^4)}\leq \big\|\tilde{\psi}_n\big\|_{L^2(\R)}\|\tilde{\rho}_n\|_{L^1(\R)}.$$
Since $\big\|\tilde{\psi}_n\big\|_{L^2(\R)}=\frac{\alpha_n}{2}\left(\Int_{0}^{\infty}|\psi(\tau)|^2{\rm e}^{-4\alpha_n \tau}\;d\tau\right)^\frac{1}{2}$ and $\|\tilde{\rho}_n\|_{L^1(\R)}=\Int_{-1}^1\rho(\tau){\rm e}^{-2\tau} \;d\tau$, we obtain \eqref{p1}.\\
 Similarly, writing
\begin{eqnarray*}
 \Big{\|}\frac{\partial g_n}{\partial r}\Big{\|}_{L^2(\R^4)}&=&
\frac{1}{2}\left(\int_\R|(\psi'*\rho_n)(s)|^2{\rm e}^{-2\alpha_ns}\;ds\right)^\frac{1}{2}\\
&=&\Big\|\tilde{\tilde{\psi}}_n*\tilde{\tilde{\rho}}_n\Big\|_{L^2(\R)},
\end{eqnarray*}
where $\tilde{\tilde{\psi}}_n(\tau)=\frac{1}{2}\psi'(\tau){\rm e}^{-\alpha_n\tau}$ and $\tilde{\tilde{\rho}}_n(\tau)=\rho_n(\tau){\rm e}^{-\alpha_n\tau}$ and using Young's inequality, we infer that
\begin{eqnarray*}
\Big{\|}\frac{\partial g_n}{\partial r}\Big{\|}_{L^2(\R^4)}&\leq&\Big\|\tilde{\tilde{\psi}}\Big\|_{L^2(\R)}\Big\|\tilde{\tilde{\rho}}\Big\|_{L^1(\R)}\\
&\leq&\frac{1}{2}\left(\Int_{\R}|\psi'(\tau)|^2{\rm e}^{-2\alpha_n\tau}\;d\tau\right)^\frac{1}{2}\int_{-1}^1\rho(\tau){\rm e}^{-\tau}\;d\tau,
\end{eqnarray*}
which leads to \eqref{p2}.\\
Also, we have
$$\Big\|\frac{1}{r}\partial_rg_n\Big\|_{L^2(\R^4)}=\frac{1}{2}\|\psi'*\rho_n\|_{L^2(\R)}\leq \frac{1}{2}\|\psi'\|_{L^2(\R)}.$$
Finally,
\begin{eqnarray*}
\|\Delta g_n\|_{L^2(\R^4)}
&=&\frac{1}{2}\left(\Int_\R\Big|-2(\psi'*\rho_n)(s)+\frac{1}{\alpha_n}(\psi'*\rho_n')(s)\Big|^2\;ds\right)^\frac{1}{2}\\
&\leq& \|\psi'*\rho_n\|_{L^2(\R)}+\frac{1}{2\alpha_n} \|\psi'*\rho_n'\|_{L^2(\R)}\\
&\leq&\|\psi'\|_{L^2(\R)}+\frac{1}{2\alpha_n}\|\psi'\|_{L^2(\R)}\|\rho'_n\|_{L^1(\R)}.
\end{eqnarray*}
The fact that $\|\rho'_n\|_{L^1(\R)}=\alpha_n\Int_{-1}^1\rho'(\tau)\;d\tau$ ensures \eqref{p5}.
\subsection{Extraction of the first scale and the first profile}
Let us consider a bounded sequence $(u_n)$ in $H^2_
{rad}(\R^4)$ satisfying the assumptions \eqref{hyp1}, \eqref{hyp2} and \eqref{hyp3} and let us set $$v_n(s):=u_n(e^{-s}).$$ We have the following lemma.
\begin{lem}\label{lem1}
Under the above assumptions, the sequence $(u_n)$ converges strongly to 0 in $L^2(\R^4)$. Moreover, for any real number $M$, we have
\begin{equation}\label{conv vn} \Lim_{n\rightarrow\infty}\|v_n\|_{L^\infty(]-\infty,M[)}=0.
\end{equation}
\end{lem}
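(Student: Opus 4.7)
The plan is first to establish strong $L^2$ convergence of $(u_n)$, and then to transfer this into the uniform decay of $v_n$ on $]-\infty,M[$ using the change of variable and an elementary Hölder estimate.

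For the first assertion, I would split $\|u_n\|_{L^2(\R^4)}^2$ into integrals over $B_R$ and its complement. Given $\varepsilon>0$, assumption \eqref{hyp3} lets me fix $R$ so that the tail is at most $\varepsilon/2$ uniformly in $n$ large. On the ball $B_R$, the Rellich--Kondrachov compactness $H^2(B_R)\hookrightarrow L^2(B_R)$ combined with the weak convergence \eqref{hyp1} gives $u_n\to 0$ in $L^2(B_R)$, so the remaining integral is $\leq\varepsilon/2$ for $n$ large. Hence $\|u_n\|_{L^2(\R^4)}\to 0$.

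For the second assertion, I exploit two facts. First, \eqref{C1} reads
\[
\|u_n\|_{L^2(\R^4)}^2 = 2\pi^2\int_\R e^{-4s}|v_n(s)|^2\,ds,
\]
so the first part yields $\int_\R e^{-4s}|v_n(s)|^2\,ds\to 0$. Second, Hardy's inequality in $\R^4$ applied componentwise to $\nabla u_n$ (together with $|\nabla u_n|=|\partial_r u_n|$ for radial $u_n$) gives $\bigl\|\tfrac{1}{r}\partial_r u_n\bigr\|_{L^2(\R^4)}\lesssim \|u_n\|_{H^2}$; combined with \eqref{C4} this forces $(v_n')$ to be bounded in $L^2(\R)$, so Cauchy--Schwarz yields a uniform Hölder estimate $|v_n(s)-v_n(t)|\leq C_0|s-t|^{1/2}$.

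To conclude, I would argue by contradiction. Assume there exist $\varepsilon>0$, a subsequence, and $s_n<M$ with $|v_n(s_n)|\geq\varepsilon$. Setting $\delta=(\varepsilon/(2C_0))^2$, the Hölder bound gives $|v_n(s)|\geq\varepsilon/2$ on $[s_n-\delta,s_n+\delta]$, so
\[
\int_\R e^{-4s}|v_n(s)|^2\,ds \geq 2\delta\cdot\tfrac{\varepsilon^2}{4}\cdot e^{-4(s_n+\delta)}\geq \tfrac{\varepsilon^2\delta}{2}\,e^{-4(M+\delta)},
\]
using $s_n\leq M$, which contradicts the displayed limit; the case $s_n\to-\infty$ is in fact easier since $e^{-4(s_n+\delta)}$ only grows unboundedly. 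The only mild subtlety in the whole argument is the uniform $L^2(\R)$ bound on $v_n'$, since $H^2(\R^4)$ does not embed in $L^\infty(\R^4)$; once that bound is in hand, the weighted $L^2$ convergence and the uniform Hölder regularity make the contradiction completely transparent.
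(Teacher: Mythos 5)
Your argument is correct, and while the first assertion is handled exactly as in the paper (Rellich--Kondrachov on $B_R$ via \eqref{hyp1}, plus the tail control \eqref{hyp3}), your proof of \eqref{conv vn} takes a genuinely different route. The paper simply invokes the radial estimate of Proposition \ref{est.rad}, which in the variable $s=-\log r$ reads $|v_n(s)|\lesssim {\rm e}^{\frac{3}{2}s}\|u_n\|_{L^2}^{1/2}\|\nabla u_n\|_{L^2}^{1/2}$; for $s<M$ the factor ${\rm e}^{\frac{3}{2}s}$ is bounded by ${\rm e}^{\frac{3}{2}M}$, so the strong $L^2$ convergence and the boundedness of $(\nabla u_n)$ give the uniform decay directly, with an explicit rate. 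You instead combine the weighted identity \eqref{C1} (so $\int_\R {\rm e}^{-4s}|v_n|^2\,ds\to 0$) with the uniform $L^2(\R)$ bound on $v_n'$ coming from \eqref{C4} together with Hardy's inequality (equivalently, the paper's Lemma \ref{lem4}, which gives the cleaner bound $\|v_n'\|_{L^2(\R)}\leq\frac{1}{\sqrt{8\pi^2}}\|\Delta u_n\|_{L^2}$), deduce a uniform $C^{1/2}$ modulus for the (continuous representatives of) $v_n$, and run a contradiction argument: a value $|v_n(s_n)|\geq\varepsilon$ with $s_n<M$ forces a fixed amount of weighted $L^2$ mass since ${\rm e}^{-4s}\geq {\rm e}^{-4(M+\delta)}$ on the relevant interval. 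This is sound; the uniform Hölder bound you need is exactly the ingredient the paper uses later (Corollary \ref{cor2} and Lemma \ref{lem}), so nothing extraneous is imported. What you lose compared with the paper is brevity and the quantitative pointwise bound \eqref{bound vn}; what you gain is independence from Proposition \ref{est.rad}, since everything is read off the one-dimensional reformulation. Two cosmetic points: the negation of \eqref{conv vn} only yields $s_n<M$ with $|v_n(s_n)|\geq\varepsilon/2$ (the essential supremum need not be attained), which changes nothing, and the pointwise evaluation of $v_n$ is legitimate precisely because $v_n'\in L^2(\R)$ makes $v_n$ locally absolutely continuous.
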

\begin{proof}
 For any $R>0$, we have
$$\|u_n\|_{L^2(\R^4)}=\|u_n\|_{L^2(|x|<R)}+\|u_n\|_{L^2(|x|>R)}.$$
According to Rellich's theorem, the Sobolev space $H^2(|x| < R)$ is compactly embedded in $L^2(|x| < R)$. Thanks to \eqref{hyp1}, we get $$\Lim_{n\rightarrow\infty}\|u_n\|_{L^2(|x|<R)}=0.$$
Now, taking advantage of the compactness at infinity of the sequence $(u_n)$ given by \eqref{hyp3}, we deduce that
\begin{equation}\label{lim un}
\Lim_{n\rightarrow\infty}\|u_n\|_{L^2(\R^4)}=0.
\end{equation}
Besides, according to Proposition \ref{est.rad}, we infer that
\begin{equation}\label{bound vn}
|v_n(s)|\lesssim {\rm e}^{\frac{3}{2}s}\|u_n\|_{L^2(\R^4)}^\frac{1}{2}\|\nabla u_n\|_{L^2(\R^4)}^\frac{1}{2}.
\end{equation}
For $s<M$, \eqref{conv vn} derives immediately from \eqref{bound vn} and the
strong convergence of $(u_n)$ to zero in $L^2(\R^4)$.
\end{proof}

Now, we shall determine the first scale and the first profile.
\begin{prop}\label{pro1}
For all $0<\delta<A_0$, we have
$$\Sup_{s\geq 0}\left(\Big|\frac{v_n(s)}{A_0-\delta}\Big|^2-3s\right)\underset{n\rightarrow\infty}\longrightarrow \infty.$$
\end{prop}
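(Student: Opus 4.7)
My approach proceeds by contradiction. Suppose the supremum fails to tend to infinity; then extracting a subsequence (still denoted $(v_n)$), there exists $M>0$ with
\[
\Sup_{s\geq 0}\left(\left|\frac{v_n(s)}{A_0-\delta}\right|^2 - 3s\right) \leq M\quad\text{for all } n.
\]
By hypothesis \eqref{hyp2}, a further extraction guarantees $\|u_n\|_{\mathcal{L}}>A_0-\delta$ for every $n$, so the definition of the Orlicz norm forces
\[
\Int_{\R^4}\left({\rm e}^{|u_n(x)|^2/(A_0-\delta)^2}-1\right)dx>\kappa.
\]
The plan is to contradict this by showing the left-hand side tends to zero.

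Passing to the radial variable $s=-\log|x|$ as in \eqref{C1}, the integral rewrites as $2\pi^2\Int_{\R}\left({\rm e}^{|v_n(s)|^2/(A_0-\delta)^2}-1\right){\rm e}^{-4s}\,ds$, which I split at $s=0$. On $[0,\infty)$, the contradiction hypothesis yields the majorization
\[
\left({\rm e}^{|v_n(s)|^2/(A_0-\delta)^2}-1\right){\rm e}^{-4s}\leq {\rm e}^{M+3s-4s}={\rm e}^{M-s},
\]
which lies in $L^1([0,\infty))$, and Lemma \ref{lem1} gives $v_n(s)\to 0$ for each fixed $s$ (apply it on $]-\infty,N[$ with any $N>s$); Lebesgue dominated convergence therefore sends this half of the integral to zero.

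On $(-\infty,0]$, Lemma \ref{lem1} produces $\|v_n\|_{L^{\infty}(]-\infty,0])}\to 0$, so for $n$ large $|v_n(s)|^2/(A_0-\delta)^2\leq 1$ on this range. The elementary inequality ${\rm e}^t-1\leq t\,{\rm e}^t$ then gives
\[
\left({\rm e}^{|v_n(s)|^2/(A_0-\delta)^2}-1\right){\rm e}^{-4s}\lesssim |v_n(s)|^2{\rm e}^{-4s},
\]
and integrating on $(-\infty,0]$ while invoking \eqref{C1}, this piece is controlled by a constant times $\|u_n\|_{L^2(\R^4)}^2$, which tends to zero by Lemma \ref{lem1}. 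Combining the two halves contradicts the lower bound $\kappa$.

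The quantitative linchpin is the coefficient $3$ in front of $s$: since $3<4$, the factor ${\rm e}^{3s}$ is decisively dominated by the weight ${\rm e}^{-4s}$, supplying an integrable majorant on $[0,\infty)$; any coefficient $\geq 4$ would destroy this step and invalidate the dominated convergence argument. The formally worrying region $s\to-\infty$, where ${\rm e}^{-4s}$ blows up, is tamed by the strong $L^2$-convergence of $u_n$, which follows from \eqref{hyp1} and \eqref{hyp3} via Rellich's theorem as established in Lemma \ref{lem1}. Beyond these observations, no further technical obstacle is anticipated.
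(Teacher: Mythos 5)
Your proof is correct and follows essentially the same route as the paper: a contradiction argument splitting the integral at $|x|=1$ (i.e.\ $s=0$), with dominated convergence on $s\geq 0$ using the integrable majorant ${\rm e}^{M-s}$ coming from the assumed bound together with the pointwise vanishing of $v_n$ from Lemma \ref{lem1}, and control of the region $|x|\geq 1$ by $\|u_n\|_{L^2}^2\to 0$ via the uniform boundedness of $u_n$ there. The only cosmetic difference is that the paper phrases the final contradiction as $\lim\|u_n\|_{\mathcal L}\leq A_0-\delta$ rather than via the lower bound $\kappa$ on the integral, and bounds the outer region with $\sup_{|t|\leq M}({\rm e}^{t^2}-1)/t^2<\infty$ instead of your ${\rm e}^t-1\leq t\,{\rm e}^t$; both are equivalent.
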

\begin{proof}
To go to the proof of Proposition \ref{pro1}, we shall proceed by contradiction by assuming that there exists a positive real $\delta$ such that, up to a subsequence extraction,
\begin{equation}\label{cont vn}
\Sup_{s\geq 0,n\in\N}\left(\Big|\frac{v_n(s)}{A_0-\delta}\Big|^2-3s\right)\leq C,
\end{equation}
 where $C$ is a positive constant. Thanks to \eqref{conv vn} and \eqref{cont vn}, we get by virtue of Lebesgue's theorem
$$\Lim_{n\rightarrow\infty}\Int_{|x|<1}\Big({\rm e}^{\big|\frac{u_n(x)}{A_0-\delta}\big|^2}-1\Big)\;dx=\Lim_{n\rightarrow\infty}2\pi^2\Int_0^\infty\Big({\rm e}^{\big|\frac{v_n(s)}{A_0-\delta}\big|^2}-1\Big){\rm e}^{-4s}\;ds=0.$$
On the other hand, using Proposition \ref{est.rad}, the boundedness of $(u_n)$ in $H^2(\R^4)$ ensures the existence of a positive constant $C$ such that
$$|u_n(x)|\leq C,\quad \forall\;n\in\N\;\mbox{ and}\;|x|\geq 1.$$
By virtue of the fact that for any positive $M$ there exists a
finite constant $C_M$ such that
$$\Sup_{|t|\leq M}\Big(\Frac{{\rm e}^{t^2}-1}{t^2}\Big)<C_M,$$
we obtain that
$$\Int_{|x|\geq 1}\Big({\rm e}^{\big|\frac{u_n(x)}{A_0-\delta}\big|^2}-1\Big)\;dx\leq C\|u_n\|_{L^2(\R^4)}^2.$$
The strong convergence of $(u_n)$ to 0 in $L^2(\R^4)$ leads to
$$\Int_{\R^4}\Big({\rm e}^{\big|\frac{u_n(x)}{A_0-\delta}\big|^2}-1\Big)\;dx\underset{n\rightarrow \infty}\longrightarrow 0.$$
Thus, $$\Lim_{n\rightarrow \infty}\|u_n\|_\mathcal{L}\leq A_0-\delta,$$
which is in contradiction with Hypothesis \eqref{hyp2}.
\end{proof}
\begin{cor}\label{cor1}
There exists a scale $\big(\alpha_n^{(1)}\big)$ such that
$$4\left|\frac {v_n\big(\alpha_n^{(1)}\big)}{A_0} \right|^2-3\,\alpha_n^{(1)}\underset{n\rightarrow\infty}\longrightarrow \infty.$$
\end{cor}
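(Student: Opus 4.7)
The plan is to deduce the corollary almost directly from Proposition \ref{pro1} by choosing $\delta$ so that $\frac{1}{(A_0-\delta)^2}$ equals exactly $\frac{4}{A_0^2}$. This happens at $\delta = A_0/2$, which lies in the admissible range $(0,A_0)$. Inserting this choice into Proposition \ref{pro1} gives
\begin{equation*}
S_n := \sup_{s\geq 0}\left(\frac{4|v_n(s)|^2}{A_0^2} - 3s\right) \underset{n\to\infty}{\longrightarrow} \infty,
\end{equation*}
which is precisely the supremum version of the quantity we want to control.

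Next, I would extract a near-maximizing sequence: for each $n$, by the definition of supremum there exists $\alpha_n^{(1)}\geq 0$ such that
\begin{equation*}
\frac{4|v_n(\alpha_n^{(1)})|^2}{A_0^2} - 3\alpha_n^{(1)} \;\geq\; \min\{n,\,S_n-1\},
\end{equation*}
and the right-hand side tends to $+\infty$ by the previous display. This produces a candidate sequence $(\alpha_n^{(1)})$ along which the desired quantity diverges; the only remaining point is to verify that $\alpha_n^{(1)}\to\infty$, so that $(\alpha_n^{(1)})$ qualifies as a scale in the sense of the paper.

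The step requiring a genuine argument is precisely this last one, and I would handle it by contradiction using Lemma \ref{lem1}. Suppose, along a subsequence, that $\alpha_n^{(1)}\leq M$ for some fixed $M>0$. By \eqref{conv vn} we have $\|v_n\|_{L^\infty(]-\infty,M+1[)}\to 0$, so in particular $v_n(\alpha_n^{(1)})\to 0$. Then
\begin{equation*}
\frac{4|v_n(\alpha_n^{(1)})|^2}{A_0^2} - 3\alpha_n^{(1)} \;\leq\; \frac{4|v_n(\alpha_n^{(1)})|^2}{A_0^2}
\end{equation*}
stays bounded above along this subsequence, contradicting divergence to $+\infty$. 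Hence every subsequence of $(\alpha_n^{(1)})$ tends to infinity, so the full sequence does, and $(\alpha_n^{(1)})$ is a scale, completing the extraction.

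In summary, Proposition \ref{pro1} supplies the crux, the choice $\delta = A_0/2$ matches coefficients, and Lemma \ref{lem1} ensures that the maximizers cannot accumulate in a bounded region. No other ingredients are required, and there is no real obstacle beyond the bookkeeping above.
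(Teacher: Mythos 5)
Your proof is correct and takes essentially the same route as the paper: the paper likewise (implicitly) applies Proposition \ref{pro1} with $\delta=A_0/2$, selects near-maximizers of $W_n(s)=4|v_n(s)/A_0|^2-3s$ over $s\geq 0$, and rules out a bounded subsequence of $\big(\alpha_n^{(1)}\big)$ via \eqref{conv vn}. Your use of $\min\{n,\,S_n-1\}$ in place of the paper's $a_n-\frac{1}{n}$ is only a minor bookkeeping variant of the same extraction.
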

\begin{proof}
Let us set
$$W_n(s):=4\left|\frac {v_n(s)}{A_0} \right|^2-3s\quad and\quad a_n:=\Sup_{s\geq 0}W_n(s).$$
Then, there exists a positive sequence $\big(\alpha_n^{(1)}\big)$ such that
$$W_n\big(\alpha_n^{(1)}\big)\geq a_n-\frac{1}{n}.$$
According to Proposition \ref{pro1}, $a_n$ tends to infinity and then
$$W_n\big(\alpha_n^{(1)}\big)\underset{n\rightarrow\infty}\longrightarrow \infty.$$
It remains to show that $\alpha_n^{(1)}\underset{n\rightarrow\infty}\longrightarrow \infty.$ If not, up to a subsequence extraction, the sequence $\big(\alpha_n^{(1)}\big) $ is bounded in $\R$ and so is $\Big(W_n\big(\alpha_n^{(1)} \big)\Big)$ thanks to \eqref{conv vn}. This yields a contradiction.
\end{proof}
\begin{cor}\label{cor2}
Under the above assumptions, we have for $n$ big enough,
$$\frac{\sqrt{3}}{2}A_0\sqrt{\alpha_n^{(1)}}\leq\big|v_n\big(\alpha_n^{(1)}\big)\big|\leq C\sqrt{\alpha_n^{(1)}}+o(1),$$
where $C=\Frac{1}{\sqrt{8\pi^2}}\Limsup_{n\rightarrow\infty}\|\Delta u_n\|_{L^2(\R^4)}.$
\end{cor}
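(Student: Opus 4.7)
\smallskip

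\noindent\textbf{Proof proposal.} The plan is to handle the two inequalities separately. For the lower bound, I would simply read it off Corollary \ref{cor1}: since $4\bigl|v_n(\alpha_n^{(1)})/A_0\bigr|^2-3\alpha_n^{(1)}\to\infty$, for $n$ large enough we have $4\bigl|v_n(\alpha_n^{(1)})\bigr|^2\geq 3A_0^2\,\alpha_n^{(1)}$, which yields $|v_n(\alpha_n^{(1)})|\geq\frac{\sqrt 3}{2}A_0\sqrt{\alpha_n^{(1)}}$.

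For the upper bound, the idea is to combine the fundamental theorem of calculus on $[0,\alpha_n^{(1)}]$ with the identities \eqref{C4}--\eqref{C5}. Writing
$$v_n(\alpha_n^{(1)})=v_n(0)+\Int_0^{\alpha_n^{(1)}}v_n'(\tau)\;d\tau,$$
the Cauchy-Schwarz inequality gives $|v_n(\alpha_n^{(1)})-v_n(0)|\leq\sqrt{\alpha_n^{(1)}}\,\|v_n'\|_{L^2(\R)}$. The boundary value $v_n(0)=u_n(1)$ is $o(1)$ by Lemma \ref{lem1} (applied on any interval $]-\infty,M[$ containing $0$), so the remaining task is to control $\|v_n'\|_{L^2(\R)}$ by $\|\Delta u_n\|_{L^2(\R^4)}$ with the sharp constant $\frac{1}{\sqrt{8\pi^2}}$.

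The main technical point, and the step I expect to be the actual heart of the argument, is the radial $\R^4$ inequality
$$\Big\|\Frac{1}{r}\partial_r u\Big\|_{L^2(\R^4)}\leq\Frac{1}{2}\,\|\Delta u\|_{L^2(\R^4)}.$$
In view of \eqref{C4} and \eqref{C5} this reduces to showing $4\int_\R|v'|^2\,ds\leq\int_\R|v''-2v'|^2\,ds$, which I would obtain by expanding the square and integrating the cross term by parts: since $\operatorname{Re}(\overline{v'}v'')=\frac{1}{2}(|v'|^2)'$, the boundary contribution vanishes for a nice radial $H^2$ function (the integration by parts being justified by density), so that
$$\Int_\R|v''-2v'|^2\,ds=4\Int_\R|v'|^2\,ds+\Int_\R|v''|^2\,ds\geq 4\Int_\R|v'|^2\,ds.$$
Combined with \eqref{C4}, this yields $\|v_n'\|_{L^2(\R)}\leq\frac{1}{\sqrt{8\pi^2}}\|\Delta u_n\|_{L^2(\R^4)}$.

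Plugging these three ingredients together, one concludes that
$$\bigl|v_n(\alpha_n^{(1)})\bigr|\leq\Frac{\|\Delta u_n\|_{L^2(\R^4)}}{\sqrt{8\pi^2}}\sqrt{\alpha_n^{(1)}}+o(1),$$
which, taking the $\limsup$ of the prefactor (possibly after extracting a subsequence so that $\|\Delta u_n\|_{L^2(\R^4)}$ converges to its upper limit), gives the announced upper bound with $C=\frac{1}{\sqrt{8\pi^2}}\Limsup_{n\to\infty}\|\Delta u_n\|_{L^2(\R^4)}$.
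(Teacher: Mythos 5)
Your proposal is correct and follows essentially the same route as the paper: the lower bound is read off Corollary \ref{cor1}, and the upper bound comes from the fundamental theorem of calculus, Cauchy--Schwarz, the convergence $v_n(0)\to 0$ from Lemma \ref{lem1}, and the key inequality $\|v_n'\|_{L^2(\R)}\leq\frac{1}{\sqrt{8\pi^2}}\|\Delta u_n\|_{L^2(\R^4)}$. The only (cosmetic) difference is that you prove this last inequality by integrating the cross term by parts in the logarithmic variable $s$ via \eqref{C4}--\eqref{C5}, whereas the paper's Lemma \ref{lem4} carries out the equivalent integration by parts in the radial variable $r$; the two computations coincide under the change of variables $s=-\log r$.
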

\begin{proof} The left hand side inequality follows directly from Corollary \ref{cor1}.
On the other hand, for any $s\geq 0$ and according to Cauchy-Schwarz inequality, we obtain that
$$|v_n(s)|=\Big|v_n(0)+\Int_0^s v'_n(\tau)\;d\tau\Big|\leq|v_n(0)|+\sqrt{s}\|v'_n\|_{L^2(\R)}.$$
By virtue of \eqref{C4} and Lemma \ref{lem4}, we get
 $$\|v'_n\|_{L^2(\R)}=\Big(\Int_0^\infty \Big|\frac{1}{r}u'_n(r)\Big|^2r^3\;dr\Big)^\frac{1}{2}\leq \frac{1}{\sqrt{8\pi^2}}\|\Delta u_n\|_{L^2(\R^4)}.$$
Using the boundedness of the sequence $(\Delta u_n)$ in $L^2(\R^4)$ and the convergence of $\big(v_n(0)\big)$ to zero, we infer that
$$|v_n(s)|\leq o(1)+C\sqrt{s},$$
where $C=\Frac{1}{\sqrt{8\pi^2}}\Limsup_{n\rightarrow\infty}\|\Delta u_n\|_{L^2(\R^4)},$ which ensures the right hand side inequality.
\end{proof}
Now we are able to extract the first profile. To do so, let us set
 $$\psi_n(y):=\sqrt{\Frac{8\pi^2}{\alpha_n^{(1)}}}v_n\big(\alpha_n^{(1)}y\big).$$
The following lemma summarizes the principle properties of $\psi_n$.
\begin{lem}\label{lem}
Under the same assumptions, we have
\begin{equation}\label{psi_n(1)}
\sqrt{6\pi^2}A_0\leq|\psi_n(1)|\leq C+o(1),
\end{equation}
where $C=\Limsup_{n\rightarrow\infty}\|\Delta u_n\|_{L^2(\R^4)}.$
Moreover, there exists a profile $\psi^{(1)}$ such that, up to a subsequence extraction, $$\psi'_n\underset{n\rightarrow\infty}\rightharpoonup(\psi^{(1)})'\;in\;L^2(\R)\quad and\quad \big\|(\psi^{(1)})'\big\|_{L^2(\R)}\geq \sqrt{6\pi^2}A_0.$$
\end{lem}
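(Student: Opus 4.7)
The first assertion is a direct rescaling of Corollary \ref{cor2}. Evaluating $\psi_n(1)=\sqrt{8\pi^2/\alpha_n^{(1)}}\,v_n(\alpha_n^{(1)})$ and multiplying the two-sided bound of Corollary \ref{cor2} by $\sqrt{8\pi^2/\alpha_n^{(1)}}$, the lower bound becomes $\sqrt{8\pi^2}\cdot\frac{\sqrt{3}}{2}A_0=\sqrt{6\pi^2}\,A_0$ while the upper bound reads $\sqrt{8\pi^2}\cdot\tfrac{1}{\sqrt{8\pi^2}}\Limsup_{n\to\infty}\|\Delta u_n\|_{L^2(\R^4)}+o(1)$, exactly \eqref{psi_n(1)}.

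For the second assertion I would first obtain a uniform $L^2(\R)$-bound on $(\psi_n')$. A change of variables $s=\alpha_n^{(1)}y$ in the integral defining $\|\psi_n'\|_{L^2(\R)}^2$ gives
\[
\|\psi_n'\|_{L^2(\R)}^2=8\pi^2\Int_\R|v_n'(s)|^2\,ds,
\]
and inserting identity \eqref{C4} yields $\|\psi_n'\|_{L^2(\R)}^2=4\,\|\tfrac{1}{r}\partial_r u_n\|_{L^2(\R^4)}^2$. The Hardy-type estimate of Lemma \ref{lem4} bounds the right-hand side by (a constant times) $\|\Delta u_n\|_{L^2(\R^4)}^2$, which is uniformly bounded by hypothesis. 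Extract a subsequence along which $\psi_n'\rightharpoonup\eta$ weakly in $L^2(\R)$, and set
\[
\psi^{(1)}(y):=\Int_0^y\eta(\tau)\,d\tau.
\]
By construction $(\psi^{(1)})'=\eta\in L^2(\R)$, and the Cauchy--Schwarz bound $|\psi^{(1)}(y)|\leq\sqrt{|y|}\,\|\eta\|_{L^2(\R)}$ together with the weight $\mathrm{e}^{-4s}$ shows $\psi^{(1)}\in L^2(\R,\mathrm{e}^{-4s}ds)$.

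The step I expect to be the main obstacle is showing that $\psi^{(1)}$ vanishes on $]-\infty,0]$ (so that $\psi^{(1)}\in\mathcal{P}$) and then transferring the quantitative lower bound in \eqref{psi_n(1)} to $(\psi^{(1)})'$. For $y\leq 0$ one has $s:=\alpha_n^{(1)}y\to-\infty$, i.e.\ $|x|=\mathrm{e}^{-s}\to\infty$; the radial pointwise bound from Proposition \ref{est.rad} used in the proof of Lemma \ref{lem1} gives $|v_n(s)|\lesssim\mathrm{e}^{3s/2}\|u_n\|_{L^2(\R^4)}^{1/2}\|\nabla u_n\|_{L^2(\R^4)}^{1/2}$ for $s\leq 0$, and combining with the strong convergence $\|u_n\|_{L^2(\R^4)}\to 0$ (Lemma \ref{lem1}) yields $\psi_n(y)\to 0$ for every $y\leq 0$. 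Testing the weak convergence $\psi_n'\rightharpoonup\eta$ against the indicator $\mathbf{1}_{[z,0]}\in L^2(\R)$ with $z<0$ gives
\[
-\psi_n(z)=\Int_z^0\psi_n'(\tau)\,d\tau\ \longrightarrow\ -\psi^{(1)}(z),
\]
forcing $\psi^{(1)}(z)=0$ for every $z\leq 0$. Testing instead against $\mathbf{1}_{[0,1]}$ yields $\psi_n(1)-\psi_n(0)\to\psi^{(1)}(1)$; since $\psi_n(0)\to 0$ by the same pointwise argument, the lower bound in \eqref{psi_n(1)} passes to the limit and provides $|\psi^{(1)}(1)|\geq\sqrt{6\pi^2}\,A_0$. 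Cauchy--Schwarz then finishes:
\[
\sqrt{6\pi^2}\,A_0\leq|\psi^{(1)}(1)|=\Big|\Int_0^1\eta(\tau)\,d\tau\Big|\leq\|(\psi^{(1)})'\|_{L^2(\R)}.
\]
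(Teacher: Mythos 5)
Your proof is correct and follows essentially the same route as the paper's: rescale Corollary \ref{cor2} to get \eqref{psi_n(1)}, bound $\|\psi_n'\|_{L^2(\R)}$ through \eqref{C4} and Lemma \ref{lem4}, extract a weak limit $\eta$ and set $\psi^{(1)}(y)=\int_0^y\eta$, then recover $\big\|(\psi^{(1)})'\big\|_{L^2(\R)}\ge\sqrt{6\pi^2}A_0$ from $|\psi^{(1)}(1)|\ge\sqrt{6\pi^2}A_0$ by Cauchy--Schwarz. The only (harmless) deviation is in proving $\psi^{(1)}_{|]-\infty,0]}=0$: you use the pointwise decay of Proposition \ref{est.rad} together with $\|u_n\|_{L^2(\R^4)}\to 0$ to get $\psi_n(y)\to 0$ for every $y\le 0$, whereas the paper deduces a.e.\ convergence to zero on $]-\infty,0[$ from the weighted bound $\int_{-\infty}^0|\psi_n(s)|^2\,ds\le 4\big(\alpha_n^{(1)}\big)^{-2}\|u_n\|_{L^2(\R^4)}^2$; both arguments are valid.
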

 \begin{proof}
According to Corollary \ref{cor2}, we get \eqref{psi_n(1)}. Besides, thanks to \eqref{C4} and Lemma \ref{lem4} we obtain that
$$\|\psi'_n\|_{L^2(\R)}= \sqrt{8\pi^2}\Big(\Int_0^\infty \Big|\frac{1}{r}u'_n(r)\Big|^2r^3\;dr\Big)^\frac{1}{2}\leq\|\Delta u_n\|_{L^2(\R^4)}.$$
Then, $(\psi'_n)$ is bounded in $L^2(\R)$. Consequently, up to a subsequence extraction, $(\psi'_n)$ converges weakly in $L^2(\R)$ to some function $g\in L^2(\R)$. Let us introduce the function
$$\psi^{(1)}(s):=\Int_0^s g(\tau)\;d\tau.$$
It's obvious that, up asubsequence extraction, $\psi'_n\rightharpoonup(\psi^{(1)})'$ in $L^2(\R)$. It remains to prove that $\psi^{(1)}$ is a profile.\\ Firstly, since $$\big{|}\psi^{(1)}(s)\big{|}=\Big|\Int_0^sg(\tau)\; d\tau\Big|\leq \sqrt{s} \|g\|_{L^2(\R)},$$ we get $\psi^{(1)}\in L^2(\R_+,{\rm e}^{-4s}ds).$\\
Secondly, $\psi^{(1)}(s)=0$ for all $s\leq 0$. Indeed, using the fact that $$\|u_n\|_{L^2(\R^4)}^2=\Frac{\big(\alpha_n^{(1)}\big)^2}{4}\Int_\R |\psi_n(s)|^2 {\rm e}^{-4\alpha_n^{(1)}s}\;ds,$$
we obtain that
$$\Int_{-\infty}^0|\psi_n(s)|^2\;ds\leq\Int_{-\infty}^0|\psi_n(s)|^2 {\rm e}^{-4\alpha_n^{(1)}s}\;ds\leq \Frac{4}{\big(\alpha_n^{(1)}\big)^2}\|u_n\|_{L^2(\R^4)}^2.$$
By virtue of the boundedness of $(u_n)$ in $L^2(\R^4)$, we deduce that $\psi_n$ converges strongly to zero in $L^2(]-\infty,0[).$ Consequently, for almost all $s\leq 0$, up to a subsequence extraction, $\big(\psi_n(s)\big)$ goes to zero. In other respects, as $(\psi'_n)$ converges weakly to $g$ in $L^2(\R)$ and $\psi_n$ belongs to $H^1_{loc}(\R)$, we infer that
\begin{equation*}
\psi_n(s)-\psi_n(0)=\Int_0^s \psi'_n(\tau)\;d\tau\underset{n\rightarrow \infty}\longrightarrow \Int_0^s g(\tau)\; d\tau=\psi^{(1)}(s).
\end{equation*}
This gives rise to the fact that
\begin{equation}\label{a}
\psi_n(s)\underset{n\rightarrow \infty}\longrightarrow \psi^{(1)}(s),\quad\forall\;s\in \R,
\end{equation}
and ensures that ${\psi^{(1)}}_{|]-\infty,0]}=0$.\\
 Finally, knowing that $\big|\psi^{(1)}(1)\big|\geq\sqrt{6\pi^2}A_0$ and
$$\big\|(\psi^{(1)})'\big\|_{L^2(\R)}\geq \Int_0^1\big|(\psi^{(1)})'(\tau)\big|\;d\tau=\big|\psi^{(1)}(1)\big|,$$
we deduce that $\big\|(\psi^{(1)})'\big\|_{L^2(\R)}\geq \sqrt{6\pi^2}A_0$.
\end{proof}
%%%%%%%%%%%%%%%%%%%%%%%%%%%%%%%%%%%%%%%%%%%%%%%%%%%%%%%%%%%%%%%%%%%%%%%%%%%%%%%%%%%%%%%%%%%

Let us now consider the first remainder term:
\begin{equation}\label{def}
r_n^{(1)}(x)=u_n(x)-g_n^{(1)}(x),
\end{equation}
where $$g_n^{(1)}(x)=\sqrt{\frac{\alpha_n^{(1)}}{8\pi^2}}\big(\psi^{(1)}\ast\rho_{n}^{(1)}\big)\left(\frac{-\log|x|}
{\alpha_n^{(1)}}\right)$$
with $\rho_{n}^{(1)}(s)=\big(\alpha_n^{(1)}\big)\rho \big(\alpha_n^{(1)}s\big)$. Recalling  that $u_n(x)=\sqrt{\frac{\alpha_n^{(1)}}{8\pi^2}}\psi_n\left(\frac{-\log|x|}{\alpha_n^{(1)}}\right)$ and
taking advantage of the fact that $(\psi_n')$ converges weakly in $L^2(\R)$ to $(\psi^{(1)})'$, we get the following result.
\begin{prop}
Let $(u_n)_n$ be a sequence in $H_{rad}^2(\mathbb{R}^4)$ satisfying the assumptions of Theorem \ref{th}. Then, there exist a scale $\big(\alpha_n^{(1)}\big)$ and a profile $\psi^{(1)}$ such
that
\begin{equation}\label{psi}
 \big\|(\psi^{(1)})'\big\|_{L^2(\R)}\geq \sqrt{6\pi^2} A_0.
\end{equation}
In addition, we have
\begin{equation}\label{r1}
\Lim_{n\rightarrow \infty}\Big\|\frac{1}{r} {\partial}_r r_n^{(1)}\Big\|_{L^2(\R^4)}^2=\underset{n\rightarrow \infty}{\lim}\Big\|\frac{1}{r} {\partial}_r u_n\Big\|_{L^2(\R^4)}^2-\frac{1}{4}\big\|(\psi^{(1)})'\big\|_{L^2(\R)}^2,
\end{equation}
where $r_n^{(1)}$ is given by \eqref{def}.
\end{prop}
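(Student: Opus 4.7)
The existence of the scale $(\alpha_n^{(1)})$ and the profile $\psi^{(1)}$ satisfying \eqref{psi} has already been produced by Corollary~\ref{cor1} and Lemma~\ref{lem}, so the heart of the proposition is the asymptotic orthogonality identity \eqref{r1}. My plan is to transfer the norm $\|\frac{1}{r}\partial_r \cdot\|_{L^2(\R^4)}$ to a one-dimensional $L^2$-norm of a derivative via \eqref{C4}, and then exploit the weak convergence $\psi_n'\rightharpoonup(\psi^{(1)})'$ in $L^2(\R)$ obtained in Lemma~\ref{lem}.

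Concretely, I would set $v_n(s):=u_n(\mathrm{e}^{-s})$ and $w_n(s):=g_n^{(1)}(\mathrm{e}^{-s})$. The relation $v_n(s)=\sqrt{\alpha_n^{(1)}/(8\pi^2)}\,\psi_n(s/\alpha_n^{(1)})$ and the analogous formula for $w_n$, combined with the change of variable $t=s/\alpha_n^{(1)}$, yield the compact identities $\|\frac{1}{r}\partial_r u_n\|_{L^2(\R^4)}^2=\frac{1}{4}\|\psi_n'\|_{L^2(\R)}^2$ and $\|\frac{1}{r}\partial_r g_n^{(1)}\|_{L^2(\R^4)}^2=\frac{1}{4}\|(\psi^{(1)})'*\rho_n^{(1)}\|_{L^2(\R)}^2$, with the corresponding expression for $r_n^{(1)}=u_n-g_n^{(1)}$ involving the norm of $\psi_n'-(\psi^{(1)})'*\rho_n^{(1)}$. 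Expanding the square reduces \eqref{r1} to establishing the limit $\int_\R\psi_n'(t)\,\overline{((\psi^{(1)})'*\rho_n^{(1)})(t)}\,dt\longrightarrow\|(\psi^{(1)})'\|_{L^2(\R)}^2$.

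I would close this limit by a weak--strong duality argument. By Lemma~\ref{lem}, $\psi_n'\rightharpoonup(\psi^{(1)})'$ weakly in $L^2(\R)$. On the other hand, because $\rho_n^{(1)}(s)=\alpha_n^{(1)}\rho(\alpha_n^{(1)}s)$ is an approximate identity with $\int\rho=1$ and support shrinking to $\{0\}$, the standard mollification result in Lebesgue spaces yields $(\psi^{(1)})'*\rho_n^{(1)}\to(\psi^{(1)})'$ strongly in $L^2(\R)$. Pairing weak convergence with strong convergence gives the claimed limit, and the same strong convergence handles $\|(\psi^{(1)})'*\rho_n^{(1)}\|_{L^2(\R)}^2\to\|(\psi^{(1)})'\|_{L^2(\R)}^2$. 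Assembling the three pieces, after extracting a subsequence along which $\|\psi_n'\|_{L^2(\R)}^2$ converges, produces \eqref{r1}.

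The main obstacle, in my view, is conceptual rather than computational: the orthogonality identity works specifically for the norm $\|\frac{1}{r}\partial_r \cdot\|_{L^2(\R^4)}$ and not for the full $H^2$ norm, precisely because the scaling $s\mapsto s/\alpha_n^{(1)}$ turns this particular quadratic form into the plain $L^2(\R)$ inner product of derivatives, with no residual exponential weight. By contrast, \eqref{p2} shows that the pure radial derivative $\|\partial_r g_n^{(1)}\|_{L^2}$ carries a factor $\mathrm{e}^{-2\alpha_n^{(1)}s}$ that would destroy the orthogonality. This is precisely why \eqref{C4} was highlighted as the fundamental quantity in Subsection~2.2, and it is this Pythagorean decomposition at the level of $\|\frac{1}{r}\partial_r \cdot\|_{L^2(\R^4)}$ that will ultimately drive the termination of the iterative extraction procedure in the proof of Theorem~\ref{th}.
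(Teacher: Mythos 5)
Your proposal is correct and follows essentially the same route as the paper: reduce $\|\frac{1}{r}\partial_r\,\cdot\,\|_{L^2(\R^4)}$ to $\frac{1}{4}\|\cdot\|_{L^2(\R)}^2$ of the corresponding derivatives via \eqref{C4}, expand the square, and evaluate the cross term by pairing the weak convergence $\psi_n'\rightharpoonup(\psi^{(1)})'$ with the strong $L^2$ convergence of the mollification $(\psi^{(1)})'*\rho_n^{(1)}\to(\psi^{(1)})'$. The paper's proof does exactly this (splitting the cross term into a weak-limit piece and a Cauchy--Schwarz-controlled error), so no substantive difference remains.
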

\begin{proof}
The inequality \eqref{psi} is contained in Lemma \ref{lem}. Besides, noticing that
$$\Big\|\frac{1}{r} {\partial}_r r_n^{(1)}\Big\|_{L^2(\mathbb{R}^4)}=\frac{1}{2}\big\|\psi'_n-\big((\psi^{(1)})'\ast \rho^{(1)}_n\big)\big\|_{L^2(\mathbb{R})},$$
we get
\begin{eqnarray*}
  \underset{n\rightarrow \infty}{\lim}\Big\|\frac{1}{r} {\partial}_r r_n^{(1)}\Big\|_{L^2(\mathbb{R}^4)}^2
 &=& \frac{1}{4}\underset{n\rightarrow \infty}{\lim}\|\psi'_n\|_{L^2(\R)}^2+\frac{1}{4}\underset{n\rightarrow \infty}{\lim}\big\|({\psi}^{(1)})'\ast\rho^{(1)}_n\big\|_{L^2(\R)}^2\\
&-&\frac{1}{2}\underset{n\rightarrow \infty}{\lim}\int_{\mathbb{R}} \psi'_n(s)\big((\psi^{(1)})'\ast\rho^{(1)}_n\big)(s)\;ds\\
&=& \underset{n\rightarrow \infty}{\lim}\Big\|\frac{1}{r} {\partial}_r u_n\Big\|_{L^2(\mathbb{R}^4)}^2+\frac{1}{4}\big\|({\psi}^{(1)})'\big\|_{L^2(\R)}^2\\
&-&\frac{1}{2}\underset{n\rightarrow \infty}{\lim}\int_{\mathbb{R}} \psi'_n(s)\big((\psi^{(1)})'\ast\rho^{(1)}_n\big)(s)\;ds.
\end{eqnarray*}
We write
\begin{eqnarray*}
% \nonumber to remove numbering (before each equation)
  \Int_\R \psi'_n(s)\big((\psi^{(1)})'\ast \rho^{(1)}_n\big)(s)\; ds &=& \Int_\R \psi'_n(s)\Big[\big((\psi^{(1)})'\ast\rho_n^{(1)}\big)(s)-(\psi^{(1)})'(s)\Big]\; ds\\
&+& \Int_\R \psi'_n (s)(\psi^{(1)})'(s)\;ds.
\end{eqnarray*}
Since $(\psi_n')$ converges weakly in $L^2(\R)$ to $(\psi^{(1)})'$, we obtain that
\begin{equation}\label{conv1}
\Int_\R \psi'_n (s)(\psi^{(1)})'(s)\;ds\underset{n\rightarrow\infty}\longrightarrow \big\|(\psi^{(1)})'\big\|_{L^2(\R)}^2.
\end{equation}
Besides, according to Cauchy-Schwarz inequality, we infer that
\begin{eqnarray*}
% \nonumber to remove numbering (before each equation)
 \Big| \Int_\R  \psi'_n(s)\Big[\big((\psi^{(1)})'\ast \rho_n^{(1)}\big)(s)-(\psi^{(1)})'(s)\Big]\; ds\Big|
  &\leq& \|\psi'_n\|_{L^2(\R)} \big\|\big((\psi^{(1)})'\ast \rho_n^{(1)}\big)-(\psi^{(1)})'\big\|_{L^2(\R)}\\
  &\leq& 4\Big\|\frac{1}{r} \partial_r u_n\Big\|_{L^2(\R^4)} \big\|\big((\psi^{(1)})'\ast \rho^{(1)}_n\big)-(\psi^{(1)})'\big\|_{L^2(\R)}.
\end{eqnarray*}
The boundedness of $(\frac{1}{r} \partial_r u_n)$ in $L^2(\R^4)$ and the strong convergence of $\big((\psi^{(1)})'\ast \rho^{(1)}_n\big)$ to $(\psi^{(1)})'$ in $L^2(\R)$ imply that
\begin{equation}\label{conv2}
 \Int_\R  \psi'_n(s)\Big[\big((\psi^{(1)})'\ast \rho_n^{(1)}\big)(s)-(\psi^{(1)})'(s)\Big]\; ds\underset{n\rightarrow\infty}\longrightarrow 0.
\end{equation}
 Taking advantage of \eqref{conv1} and \eqref{conv2}, we deduce \eqref{r1}.
\end{proof}
\subsection{Conclusion}
Our concern now is to iterate the previous process and to prove that the algorithmic construction
converges. Thanks to the fact that $\big(\psi^{(1)}*\rho^{(1)}_n\big)$ is supported in $ [-\frac{1}{\alpha_n},\infty[$, we get for $R> {\rm e}$,
\begin{eqnarray*}
% \nonumber to remove numbering (before each equation)
 \big\|r_n^{(1)}\big\|_{L^2(|x|> R)}^2 &=& \frac{1}{4}\big(\alpha_n^{(1)}\big)^2\int_{-\infty}^{-\frac{\log R}{\alpha_n^{(1)}}}|\psi_n(t)-\big(\psi^{(1)}\ast\rho^{(1)}_n)(t)\big|^2 {\rm e}^{-4\alpha_n^{(1)}t}\;dt \\
   &=& \frac{1}{4}\big(\alpha_n^{(1)}\big)^2\int_{-\infty}^{-\frac{\log R}{\alpha_n^{(1)}}}|\psi_n(t)|^2 {\rm e}^{-4\alpha_n^{(1)}t}\;dt \\
  &=& \|u_n\|_{L^2(|x|> R)}^2.
\end{eqnarray*}
This implies that $\big(r_n^{(1)}\big)$ satisfies the hypothesis of compactness \eqref{hyp3}. According to \eqref{r1} and the inequalities \eqref{p1}, \eqref{p2} and \eqref{p4}, we deduce that $\big(r_n^{(1)}\big)$ satisfies also \eqref{hyp1}.\\
Let us now define $A_1=\Limsup_{n\rightarrow \infty}\big\|r_n^{(1)}\big\|_{\mathcal{L}}$. If $A_1=0$, we stop the process. If not, since the sequence $\big(r_n^{(1)}\big)$ satisfies the assumptions of Theorem \ref{th}, there exists a scale $\big(\alpha_n^{(2)}\big)$ satisfying the statement of Corollary \ref{cor1} with $A_1$ instead of $A_0$. In particular, there exists a constant $C$ such that
\begin{equation}\label{extr}
    \frac{\sqrt {3}}{2}A_1\sqrt{\alpha_n^{(2)}}\leq \big|\tilde{r}_n^{(1)}\big(\alpha_n^{(2)}\big)\big|\leq C \sqrt{\alpha_n^{(2)}}+o(1),
\end{equation}
where $\tilde{r}_n^{(1)}(s)=r_n^{(1)}({\rm e}^{-s})$. In addition, the scales $\big(\alpha_n^{(1)}\big)$ and $\big(\alpha_n^{(2)}\big)$ are orthogonal. Otherwise, there exists a constant $C$ such that
$$\frac{1}{C}\leq \left|\frac{\alpha_n^{(2)}}{\alpha_n^{(1)}}\right|\leq C.$$
Using \eqref{def}, we get
$$\tilde{r}_n^{(1)}\big(\alpha_n^{(2)}\big)=\sqrt{\frac{\alpha_n^{(1)}}{8\pi^2}}\left(\psi_n\left(\frac{\alpha_n^{(2)}}{\alpha_n^{(1)}}\right)
-\big(\psi^{(1)}\ast\rho^{(1)}_n\big)\left(\frac{\alpha_n^{(2)}}{\alpha_n^{(1)}}\right)\right).$$
For any real number $s$, we have
 $$ \big|\psi_n(s)-\big(\psi^{(1)}\ast\rho^{(1)}_n\big)(s)\big| \leq \big|\psi_n(s)-\psi^{(1)}(s)\big|+\big|\big(\psi^{(1)}\ast\rho^{(1)}_n\big)(s)-\psi^{(1)}(s)\big|. $$
As $\psi^{(1)}$ belongs to the H\"{o}lder space $C^{\frac{1}{2}}$, we obtain that
\begin{eqnarray*}
\big|\big(\psi^{(1)}\ast\rho^{(1)}_n\big)(s)-\psi^{(1)}(s)\big|&=&\Big|\Int_{-\frac{1}{\alpha_n}}^{\frac{1}{\alpha_n}}\rho_n^{(1)}(t)\Big(\psi^{(1)}(s-t)-\psi^{(1)}(s)\Big)\;dt\Big|\\
&\lesssim&\Int_{-\frac{1}{\alpha_n}}^{\frac{1}{\alpha_n}}\rho_n^{(1)}(t)\sqrt{|t|}\;dt\\
&\lesssim& \frac{1}{\sqrt{\alpha_n}}\underset{n\rightarrow\infty}\longrightarrow 0.
\end{eqnarray*}
Thanks to \eqref{a}, we infer that
 $$\big|\psi_n(s)-\big(\psi^{(1)}\ast\rho^{(1)}_n\big)(s)\big|  \underset{n\rightarrow \infty}{\longrightarrow}0.$$
This gives rise to
$$\underset{n\rightarrow\infty}{\lim}\sqrt{\frac{8\pi^2}{\alpha_n^{(1)}}}~\tilde{r}_n^{(1)}\big(\alpha_n^{(2)}\big)=\underset{n\rightarrow \infty}{\lim}\left(\psi_n\left(\frac{\alpha_n^{(2)}}{\alpha_n^{(1)}}\right)-\big(\psi^{(1)}\ast\rho^{(1)}_n\big)\left(\frac{\alpha_n^{(2)}}
{\alpha_n^{(1)}}\right)\right)=0,$$
which is in contradiction with the left hand side inequality of \eqref{extr}.\\
Moreover, there exists a profile $\psi^{(2)}$ such that
$$r_n^{(1)}(x)=\sqrt{\frac{\alpha_n^{(2)}}{8\pi^2}} \big(\psi^{(2)}\ast\rho^{(2)}_n\big)\left(\frac{-\log|x|}{\alpha_n^{(2)}}\right) + r_n^{(2)}(x),$$
where $\rho^{(2)}_n(s)=\alpha_n^{(2)}\rho\big(\alpha_n^{(2)}s\big)$.
Proceeding as the first step, we obtain that $$\big\|(\psi^{(2)})'\big\|_{L^2(\R)}\geq \sqrt{6\pi^2} A_1  \mbox{\, and \,}
\underset{n\rightarrow \infty}{\lim}\Big\|\frac{1}{r} {\partial}_r r_n^{(2)}\Big\|_{L^2(\R^4)}^2=\underset{n\rightarrow \infty}
{\lim}\Big\|\frac{1}{r}{\partial}_r r_n^{(1)}\Big\|_{L^2(\R^4)}^2-\frac{1}{4} \big\|(\psi^{(2)})'\big\|_{L^2(\R)}^2.$$
Consequently,
$$\underset{n\rightarrow \infty}{\lim}\Big\|\frac{1}{r}{\partial}_r r_n^{(2)}\Big\|_{L^2(\R^4)}^2 \leq C-\frac{3\pi^2}{2} A_0^2-\frac{3\pi^2}{2} A_1^2,$$
where $C=\Limsup_{n\rightarrow \infty}\Big \|\Frac{1}{r}{\partial}_r u_n\Big\|_{L^2(\R^4)}^2.$
At iteration $\ell$, we get
\begin{equation*}
    u_n(x)=\sum_{j=1}^{\ell}\sqrt{\frac{\alpha_n^{(j)}}{8\pi^2}}\big(\psi^{(j)}\ast\rho^{(j)}_n\big)\left(\frac{-\log|x|}{\alpha_{n}^{(j)}}\right)+
r_n^{(\ell)}(x),
\end{equation*}
with
$$\Limsup_{\alpha\rightarrow \infty}\Big\|\frac{1}{r}{\partial}_r r_n^{(\ell)}\Big\|_{L^2}^2 \lesssim 1- A_0^2- A_1^2-...-A_{\ell-1}^2.$$
Therefore $A_{\ell}\rightarrow 0$ as $\ell\rightarrow \infty$ and the proof of the main theorem is achieved.
\section{Appendix}
The first part of this appendix presents the proof of the following proposition concerning the convergence in $H^2(\R^4)$ of the sequence $(f_{\alpha})$ defined by \eqref{exem}.
\begin{prop} We have
   $$\|f_{\alpha}\|_{L^2(\R^4)}^2=O\Big(\frac{1}{\alpha}\Big),\quad\|\nabla f_{\alpha}\|_{L^2(\R^4)}^2=O\Big(\frac{1}{\alpha}\Big)\quad and \quad\|\Delta f_{\alpha}\|_{L^2(\R^4)}^2=1+O\Big(\frac{1}{\alpha}\Big).$$
\end{prop}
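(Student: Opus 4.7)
The plan is to use the radial structure throughout: for any radial function $u$ on $\R^4$, $\|u\|_{L^2}^2 = 2\pi^2 \int_0^\infty |u(r)|^2 r^3\, dr$, $\|\nabla u\|_{L^2}^2 = 2\pi^2 \int_0^\infty |u'(r)|^2 r^3\, dr$, and $\Delta u = u''(r) + (3/r)u'(r)$, as recorded in \eqref{C1}--\eqref{C5}. Each of the three norms then splits into contributions from the three natural regions $\{r \leq {\rm e}^{-\alpha}\}$, $\{{\rm e}^{-\alpha} < r \leq 1\}$, and $\{r > 1\}$, matching the three-part definition of $f_\alpha$ in \eqref{exem}.

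First I would dispose of the outer region $|x|>1$. The pointwise estimates $\eta_\alpha,\nabla\eta_\alpha,\Delta\eta_\alpha = O(1/\sqrt{\alpha})$ combined with the compact support of $\eta_\alpha$ (which is independent of $\alpha$) yield contributions of order $O(1/\alpha)$ to each of the three $L^2$ norms, with no further structural input needed.

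For the innermost region $|x|\leq {\rm e}^{-\alpha}$, I would observe that $f_\alpha = \sqrt{\alpha/(8\pi^2)} + (1-r^2{\rm e}^{2\alpha})/\sqrt{32\pi^2\alpha}$ is uniformly $O(\sqrt{\alpha})$ on the ball, so $\int |f_\alpha|^2\,dx = O(\alpha\, {\rm e}^{-4\alpha})$, dominated by $O(1/\alpha)$. Differentiating gives $f_\alpha'(r) = -2r\,{\rm e}^{2\alpha}/\sqrt{32\pi^2\alpha}$, which when integrated against $r^3$ over $[0,{\rm e}^{-\alpha}]$ yields $O({\rm e}^{-2\alpha}/\alpha)$, again negligible. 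For the Laplacian I use exactly the computation from the proof of Proposition \ref{concentration}, namely $\Delta f_\alpha = f_\alpha''+(3/r)f_\alpha' = -8{\rm e}^{2\alpha}/\sqrt{32\pi^2\alpha}$; this constant value integrated over a ball of radius ${\rm e}^{-\alpha}$ contributes exactly $1/\alpha$ to $\|\Delta f_\alpha\|_{L^2}^2$.

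The decisive piece is the middle region ${\rm e}^{-\alpha}<|x|\leq 1$, where $f_\alpha(r) = -(\log r)/\sqrt{8\pi^2\alpha}$. Direct differentiation gives $f_\alpha'(r) = -1/(r\sqrt{8\pi^2\alpha})$ and $\Delta f_\alpha(r) = -2/(r^2\sqrt{8\pi^2\alpha})$. The integrands are then $|f_\alpha|^2 r^3 = (\log r)^2 r^3/(8\pi^2\alpha)$ and $|f_\alpha'|^2 r^3 = r/(8\pi^2\alpha)$, each producing $O(1/\alpha)$ upon integration, while $|\Delta f_\alpha|^2 r^3 = 1/(2\pi^2\alpha r)$ integrates to $\alpha/(2\pi^2\alpha) = 1/(2\pi^2)$, which after the spherical prefactor $2\pi^2$ yields exactly $1$. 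The only genuinely non-routine step is this cancellation: $\int_{{\rm e}^{-\alpha}}^1 dr/r = \alpha$ compensates the $1/\alpha$ prefactor and produces the leading constant, which is precisely the reason $f_\alpha$ is engineered as a truncated logarithm on this annulus. Summing the three regions gives $\|f_\alpha\|_{L^2}^2=O(1/\alpha)$, $\|\nabla f_\alpha\|_{L^2}^2=O(1/\alpha)$, and $\|\Delta f_\alpha\|_{L^2}^2 = 1 + O(1/\alpha)$, as claimed.
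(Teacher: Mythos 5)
Your proposal is correct and follows essentially the same route as the paper's proof: split each of the three norms over the regions $\{|x|\leq {\rm e}^{-\alpha}\}$, $\{{\rm e}^{-\alpha}<|x|\leq 1\}$, $\{|x|>1\}$, use the radial formulas for the gradient and the Laplacian, and observe that the annulus term $\int_{{\rm e}^{-\alpha}}^1 dr/r=\alpha$ is what produces the leading constant $1$ in $\|\Delta f_\alpha\|_{L^2}^2$. The only cosmetic difference is that the paper evaluates the $L^2$ contribution of the annulus by integration by parts, whereas you simply bound $\int_{{\rm e}^{-\alpha}}^1(\log r)^2r^3\,dr$ by a constant; both yield $O(1/\alpha)$.
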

\begin{proof}
 Let us write
  $$\|f_{\alpha}\|_{L^2(\R^4)}^{2}=I+II+III,$$
with \begin{eqnarray*}
 I&=&\int_{|x| \leq {\rm e}^{-\alpha}}|f_{\alpha}(x)|^2\; dx,\\
II&=&\int_{{\rm e}^{-\alpha}< |x|\leq 1} |f_{\alpha}(x)|^2\; dx\quad\mbox{and}\\
 III&=&\int_{|x|>1} |f_{\alpha}(x)|^2\; dx.
\end{eqnarray*}
 It is easy to see that for $\alpha$ large enough
  \begin{eqnarray*}
 % \nonumber to remove numbering (before each equation)
    I
    &\leq& 2\pi^2 \int_{0}^{{\rm e}^{-\alpha}}r^3 \left(\sqrt{\frac{\alpha}{8\pi^2}}+\frac{1}{\sqrt{32\pi^2\alpha}}\right)^2 dr\\
&\leq&\left(\frac{\alpha}{8\pi^2}+\frac{1}{32\pi^2\alpha}+\frac{1}{8\pi^2}\right)\frac{\pi^2{\rm e}^{-4\alpha}}{2}=O\Big(\frac{1}{\alpha}\Big).
     \end{eqnarray*}
Besides, by repeated integration by parts, we obtain that
\begin{eqnarray*}
 II
&=& \frac{1}{4\alpha}\Big(-\frac{\alpha^2{\rm e}^{-4\alpha}}{4}-\Int_{\rm {e}^{-\alpha}}^1 \frac{r^3}{2}\log r\; dr\Big)\\
&=&\frac{1}{4\alpha}\Big(-\frac{\alpha^2\rm e^{-4\alpha}}{4}-\frac{\alpha \rm e^{-4\alpha}}{8}+\frac{1}{32}\big(1-\rm e^{-4\alpha}\big)\Big)=O\Big(\frac{1}{\alpha}\Big).
\end{eqnarray*}
 The fact that $\eta_\alpha\in \mathcal{D}(\mathbb {R}^4)$ and $\eta_\alpha=O\Big(\Frac{1}{\sqrt{\alpha}}\Big)$ implies that $III=O\Big(\Frac{1}{\alpha}\Big)$. \\ Now, noticing that
\begin{equation*}
\nabla f_{\alpha}(x)=\left\{%
\begin{array}{ll}
\frac{-2\,x\,{\rm e}^{2\alpha}}{\sqrt{32\pi^2 \alpha}} & \hbox{si $|x|\leq {\rm e}^{-\alpha}$}, \\\\
\frac{-x}{|x|^2\sqrt{8\pi^2 \alpha}} &\hbox{si ${\rm e}^{-\alpha}< |x|\leq 1$},\\\\
\nabla\eta_{\alpha}(x) &\hbox{si $|x|> 1$},
\end{array}%
\right.
\end{equation*}
we easily get
 $$\|\nabla f_{\alpha}\|_{L^2(\R^4)}^2= \frac{{\rm e}^{-2\alpha}}{24\alpha}+\frac{1-{\rm e}^{-2\alpha}}{8\alpha}+\int_{|x|>1}|\nabla \eta_{\alpha}(x)|^2\; dx.$$
 This ensures the result knowing that $\eta_\alpha\in \mathcal{D}(\mathbb {R}^4)$ and $\|\nabla\eta_\alpha\|_{L^{\infty}}=O\Big(\Frac{1}{\sqrt{\alpha}}\Big).$\\
 Finally, since
 \begin{equation*}
\Delta f_{\alpha}(x)=\left\{%
\begin{array}{ll}
\frac{-8{\rm e}^{2\alpha}}{\sqrt{32\pi^2 \alpha}} & \hbox{if $|x|\leq {\rm e}^{-\alpha}$}, \\\\
\frac{-2}{|x|^2\sqrt{8\pi^2 \alpha}} &\hbox{if ${\rm e}^{-\alpha}< |x|\leq 1$},\\\\
\Delta\eta_{\alpha}&\hbox{if $|x|> 1$},
\end{array}%
\right.
\end{equation*}
we get
  $$ \|\Delta f_{\alpha}\|_{L^2(\R^4)}^2 dx =
   \frac{1}{\alpha}+1+\int_{|x|>1} |\Delta \eta_{\alpha}(x)|^2\; dx,$$
which ends the proof of the last assertion in view of the fact that $\eta_\alpha\in \mathcal{D}(\mathbb {R}^4)$ and \\
$|\Delta \eta_{\alpha}|=O\Big(\Frac{1}{\sqrt{\alpha}}\Big)$.
\end{proof}
In the following proposition, we recall the characterization of $H^2_{rad}(\R^4)$ which is useful in this article.
\begin{prop}\label{prop H2}
 We have
$$H^2_{rad}(\R^4)=\Big\{u\in L^2(\R_+, r^3\;dr);\quad \partial_ru,\;\partial_r^2 u,\;\frac{1}{r}\partial_ru\in L^2(\R_+, r^3\;dr)\Big\}.$$
\end{prop}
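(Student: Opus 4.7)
The plan is to reduce the characterization to the explicit polar-coordinate identities \eqref{C1} and \eqref{C2} (which already cover $\|u\|_{L^2(\R^4)}$ and $\|\nabla u\|_{L^2(\R^4)}=\|\partial_r u\|_{L^2(\R^4)}$ for radial $u$) together with one missing ingredient: an analogous expression for $\|D^2 u\|_{L^2(\R^4)}$ in terms of weighted one-dimensional integrals of the radial profile. The standard characterization $H^2(\R^4)=\{u\in L^2(\R^4):\nabla u,\,D^2u\in L^2(\R^4)\}$ then converts the four integrability conditions into exactly membership in $H^2_{rad}(\R^4)$.

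The key computation is the Hessian of a smooth radial $u(x)=\tilde u(r)$, $r=|x|$. A direct differentiation gives
\[
\partial_i\partial_j u=\frac{\delta_{ij}}{r}\,\tilde u'(r)+\frac{x_ix_j}{r^2}\Big(\tilde u''(r)-\frac{\tilde u'(r)}{r}\Big).
\]
Squaring, summing over $i,j$, and using $\sum_i x_i^2=r^2$ together with $\sum_{i,j}\delta_{ij}^2=4$, the cross terms collapse and one obtains the pointwise identity
\[
|D^2u(x)|^2=|\tilde u''(r)|^2+3\,\Big|\frac{\tilde u'(r)}{r}\Big|^2,
\]
which integrated in polar coordinates yields
\[
\|D^2u\|_{L^2(\R^4)}^2=2\pi^2\int_0^\infty\Big(|\tilde u''(r)|^2+3\Big|\frac{\tilde u'(r)}{r}\Big|^2\Big)r^3\,dr.
\]

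With this identity at hand, both inclusions are immediate. For the direct inclusion, I would take $u\in H^2_{rad}(\R^4)$, approximate by smooth compactly supported radial functions $u_k$, apply the identities above to $u_k$, and pass to the limit in $H^2$ to conclude that $\tilde u,\,\tilde u',\,\tilde u''$ and $\tilde u'/r$ all belong to $L^2(\R_+,r^3\,dr)$. Conversely, given $\tilde u$ satisfying the four integrability conditions, the function $u(x):=\tilde u(|x|)$ satisfies $u,\nabla u,D^2u\in L^2(\R^4)$ by the identities \eqref{C1}, \eqref{C2} and the Hessian formula, hence $u\in H^2_{rad}(\R^4)$.

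The main obstacle will be justifying that the pointwise Hessian formula really describes the distributional Hessian of $u$ on all of $\R^4$, because the right-hand side contains the factor $1/r$. One must verify that no Dirac contribution is produced at the origin in the reverse inclusion; this is controlled by the weighted assumption $\tilde u'/r\in L^2(\R_+,r^3\,dr)$, which prevents any concentration at $r=0$. The cleanest way is to approximate $\tilde u$ by smooth radial functions supported away from the origin, apply the pointwise identity there, and pass to the limit using the four weighted $L^2$ bounds; a symmetric density argument handles the forward direction as well.
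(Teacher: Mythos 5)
Your proof is correct, but it is organized around a different identity than the one the paper uses. The paper does not write out a proof of the proposition at all: it reduces everything to Lemma \ref{lem4}, namely the inequality $\big\|\frac{1}{r}\partial_ru\big\|_{L^2(\R^4)}\leq\frac{1}{2}\|\Delta u\|_{L^2(\R^4)}$, proved by expanding $\Delta u=\partial_r^2u+\frac{3}{r}\partial_ru$, discarding the nonnegative square $\big(\partial_r^2u+\frac{1}{r}\partial_ru\big)^2$, and integrating the cross term $\int_0^\infty u''u'r^2\,dr=-\int_0^\infty (u')^2r\,dr$ by parts. From there the direct inclusion reads: $\Delta u\in L^2$ forces $\frac{1}{r}\partial_ru\in L^2(r^3dr)$, hence $\partial_r^2u=\Delta u-\frac{3}{r}\partial_ru\in L^2(r^3dr)$; the converse uses $\Delta u=\partial_r^2u+\frac{3}{r}\partial_ru$ and the Fourier characterization of $H^2$ by $u,\Delta u\in L^2$. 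You instead work with the full Hessian and derive the exact pointwise identity $|D^2u|^2=|\tilde u''|^2+3|\tilde u'/r|^2$; your algebra is right ($4A^2+2AB+B^2$ with $A=\tilde u'/r$, $B=\tilde u''-\tilde u'/r$ does collapse to $\tilde u''^2+3(\tilde u'/r)^2$), and after the same integration by parts it is equivalent to the paper's computation because $\|D^2u\|_{L^2}=\|\Delta u\|_{L^2}$. What your version buys is that both inclusions fall out of a single exact identity, with no need to pass through the Laplacian; what the paper's version buys is the sharper constant $\frac{1}{2}$ in \eqref{enq} (your identity alone only gives $\frac{1}{\sqrt{3}}$), which is the constant quoted later in Corollary \ref{cor2} and Lemma \ref{lem}, though any universal constant would do there. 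In both routes the only delicate point is exactly the one you flag: justifying that the pointwise formulas give the distributional derivatives across the origin in the reverse inclusion. Your plan (truncate away from $r=0$ and use the weighted bound on $\tilde u'/r$ to kill the boundary terms, or equivalently the fact that a point has zero $H^1$-capacity in $\R^4$) is the standard fix; the paper is equally silent on it, settling the direct direction by density of $\mathcal{D}_{rad}(\R^4)$ in $H^2_{rad}(\R^4)$.
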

The proof of Proposition \ref{prop H2} is based on the following lemma proved in \cite{sharp}:
\begin{lem}\label{lem4}
For all $u\in H^2_{rad}(\R^4)$, we have
\begin{equation}\label{enq}
\Big\|\frac{1}{r}\partial_ru\Big\|_{L^2(\R^4)}:=\Big(2\pi^2\Int_0^\infty |u'(r)|^2r\;dr\Big)^\frac{1}{2}\leq \frac{1}{2}\|\Delta u\|_{L^2(\R^4)}.
\end{equation}
\end{lem}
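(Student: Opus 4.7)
The plan is to reduce the claim to a one-dimensional inequality via polar coordinates, and then invoke the classical Hardy inequality on $\R^4$. Using the formula $\Delta u(x)=u''(r)+\tfrac{3}{r}u'(r)$ for a radial function $u$ on $\R^4$ together with $|S^3|=2\pi^2$, the estimate \eqref{enq} is equivalent to
\begin{equation*}
4\Int_0^\infty |u'(r)|^2\,r\;dr\;\leq\;\Int_0^\infty \Big|u''(r)+\tfrac{3}{r}u'(r)\Big|^2 r^3\;dr.
\end{equation*}

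Set $v:=u'$ and expand the square on the right-hand side:
\begin{equation*}
\Int_0^\infty \Big|v'+\tfrac{3}{r}v\Big|^2 r^3\;dr=\Int_0^\infty |v'|^2\,r^3\;dr+3\Int_0^\infty\big(|v|^2\big)'\,r^2\;dr+9\Int_0^\infty |v|^2\,r\;dr.
\end{equation*}
An integration by parts on the cross term (with vanishing boundary contributions, justified first for smooth compactly supported radial $u$) gives $\Int_0^\infty (|v|^2)'\,r^2\,dr=-2\Int_0^\infty |v|^2\,r\,dr$, which collapses the right-hand side to $\Int_0^\infty |v'|^2\,r^3\,dr+3\Int_0^\infty |v|^2\,r\,dr$.

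Consequently, the claim reduces to the one-dimensional inequality
\begin{equation*}
\Int_0^\infty |v(r)|^2\,r\;dr\;\leq\;\Int_0^\infty |v'(r)|^2\,r^3\;dr,
\end{equation*}
which is nothing but the classical Hardy inequality on $\R^4$ applied to the radial function $V(x):=v(|x|)=u'(|x|)$: in the form $\Int_{\R^4}|x|^{-2}|V|^2\,dx\leq\big(\tfrac{2}{d-2}\big)^2\Int_{\R^4}|\nabla V|^2\,dx$, the constant equals $1$ when $d=4$, which is exactly what is needed.

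The principal technical point I anticipate is the justification of the integration by parts and of Hardy's inequality applied to $v=u'$ when $u\in H^2_{rad}(\R^4)$. My plan is to carry out the computation first for $u\in C_c^\infty(\R^4)\cap\{\text{radial}\}$, where the boundary contributions vanish trivially and where $V=u'\in H^1_{rad}(\R^4)$ so that Hardy applies directly, and then pass to $H^2_{rad}(\R^4)$ by the standard density of smooth compactly supported radial functions. All the relevant quantities $\Int_0^\infty |u'|^2 r\,dr$, $\Int_0^\infty |u''|^2 r^3\,dr$ and $\Int_0^\infty |u'|^2 r^3\,dr$ are finite for such $u$, which ensures the limiting step goes through without obstruction.
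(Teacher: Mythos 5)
Your argument is correct, and it diverges from the paper's at the decisive step. Both proofs start identically: pass to polar coordinates, write $\Delta u=u''+\tfrac{3}{r}u'$, expand the square of the radial Laplacian against the weight $r^3$, and integrate the cross term $\int_0^\infty u''u'\,r^2\,dr$ by parts (with boundary terms vanishing for smooth compactly supported radial $u$, then density). The paper, however, groups the expansion as $\big|(u''+\tfrac{1}{r}u')+\tfrac{2}{r}u'\big|^2 r^3=(u''+\tfrac1r u')^2r^3+8|u'|^2r+4u''u'r^2$ and simply \emph{discards} the manifestly nonnegative square $(u''+\tfrac1r u')^2r^3$, so that after the integration by parts the inequality drops out with no further input. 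You instead expand fully, arrive at $\int_0^\infty|v'|^2r^3\,dr+3\int_0^\infty|v|^2r\,dr$ with $v=u'$, and close the gap by invoking Hardy's inequality $\int_{\R^4}|x|^{-2}|V|^2\,dx\leq\big(\tfrac{2}{d-2}\big)^2\int_{\R^4}|\nabla V|^2\,dx$ with $d=4$ applied to $V=u'(|x|)$ (which is indeed in $H^1$, being Lipschitz with compact support for smooth radial $u$; the constants and the reduction $\int|v|^2r\leq\int|v'|^2r^3$ all check out). The trade-off: the paper's proof is entirely self-contained and needs no named inequality, while yours isolates exactly where the constant $\tfrac12$ comes from, namely the sharp four-dimensional Hardy constant $1$. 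The two are close cousins, since Hardy in $d\geq 3$ is itself proved by the same integrate-by-parts-and-drop-a-square device; but as written your route imports an external lemma where the paper gets by with an algebraic regrouping.
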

\begin{proof}
By density, it suffices to consider smooth compactly supported functions. Let us then
consider $u\in \mathcal{D}_{rad}(\R^4)$. We have
\begin{eqnarray*}
\|\Delta u\|_{L^2(\R^4)}^2&=&2\pi^2\Int_0^\infty|u''(r)+\frac{3}{r}u'(r)|^2r^3\;dr\\
&=&2\pi^2\Big[\Int_0^\infty\Big(u''(r)+\frac{1}{r}u'(r)\Big)^2r^3\;dr+8\Int_0^\infty u'(r)^2r\;dr\\
&+&4\Int_0^\infty u''(r)u'(r)r^2\;dr\Big]\\
&\geq&2\pi^2\Big(8\Int_0^\infty u'(r)^2r\;dr+4\Int_0^\infty u''(r)u'(r)r^2\;dr\Big).
\end{eqnarray*}
By integration by parts, we deduce that
$$\|\Delta u\|_{L^2(\R^4)}^2\geq 8\pi^2\Int_0^\infty u'(r)^2r\;dr,$$
which achieves the proof of \eqref{enq}.
\end{proof}

It will be useful to notice, that in the radial case, we have the following estimate which implies the control of the $L^\infty$-norm far away from the origin.
\begin{prop}\label{est.rad}
Let $u\in H^1_{rad}(\R^4)$. For $r=|x|> 0$, we have
\begin{equation}\label{or1}
|u(x)|\lesssim \frac{1}{r^\frac{3}{2}}\|u\|_{L^2(\R^4)}^\frac{1}{2}\|\nabla u\|_{L^2(\R^4)}^\frac{1}{2},
\end{equation}
\end{prop}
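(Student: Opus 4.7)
The plan is to reduce to a smooth compactly supported radial function by density and then derive the pointwise bound from the fundamental theorem of calculus applied to the natural quantity $r^{3}|u(r)|^{2}$. Since a radial $u\in H^{1}(\R^{4})$ can be identified with a function of $r=|x|$, I would first remark that $\mathcal{D}_{rad}(\R^{4})$ is dense in $H^{1}_{rad}(\R^{4})$, so it suffices to establish \eqref{or1} for $u\in\mathcal{D}_{rad}(\R^{4})$ and then pass to the limit.

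For such $u$, the key computation is
\begin{equation*}
\frac{d}{dr}\bigl(r^{3}u(r)^{2}\bigr)=3r^{2}u(r)^{2}+2r^{3}u(r)u'(r).
\end{equation*}
Since $u$ has compact support, integrating this identity from $r$ to $+\infty$ yields
\begin{equation*}
r^{3}u(r)^{2}+3\int_{r}^{\infty}s^{2}u(s)^{2}\;ds=-2\int_{r}^{\infty}s^{3}u(s)u'(s)\;ds.
\end{equation*}
Both terms on the left hand side are nonnegative, so I would discard the integral term and bound
\begin{equation*}
r^{3}u(r)^{2}\leq 2\int_{0}^{\infty}s^{3}|u(s)||u'(s)|\;ds.
\end{equation*}

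The last step is Cauchy--Schwarz in the measure $s^{3}\,ds$, which produces
\begin{equation*}
r^{3}u(r)^{2}\leq 2\Bigl(\int_{0}^{\infty}s^{3}u(s)^{2}\;ds\Bigr)^{\frac{1}{2}}\Bigl(\int_{0}^{\infty}s^{3}u'(s)^{2}\;ds\Bigr)^{\frac{1}{2}}.
\end{equation*}
Using the radial formulas $\|u\|_{L^{2}(\R^{4})}^{2}=2\pi^{2}\int_{0}^{\infty}s^{3}u(s)^{2}\,ds$ and $\|\nabla u\|_{L^{2}(\R^{4})}^{2}=2\pi^{2}\int_{0}^{\infty}s^{3}u'(s)^{2}\,ds$, both of which appear earlier in the paper in the equivalent form \eqref{C1}--\eqref{C2} after the substitution $s=-\log r$, I obtain
\begin{equation*}
r^{3}u(r)^{2}\leq \frac{1}{\pi^{2}}\|u\|_{L^{2}(\R^{4})}\|\nabla u\|_{L^{2}(\R^{4})},
\end{equation*}
which is exactly \eqref{or1}. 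There is no real obstacle here; the only care-point is the bookkeeping of signs in the integration by parts, which is why I keep the positive integral $3\int_{r}^{\infty}s^{2}u(s)^{2}\,ds$ on the left hand side before discarding it. The general case $u\in H^{1}_{rad}(\R^{4})$ follows by density, noting that the right hand side of \eqref{or1} is continuous with respect to the $H^{1}$-norm and that pointwise convergence at a fixed $r>0$ is granted by the uniform estimate just established applied to differences.
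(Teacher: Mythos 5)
Your proof is correct and follows essentially the same route as the paper: reduce to $\mathcal{D}_{rad}(\R^4)$ by density, express $r^3u(r)^2$ via the fundamental theorem of calculus, and apply Cauchy--Schwarz against the measure $s^3\,ds$ to land on $\|u\|_{L^2(\R^4)}\|\nabla u\|_{L^2(\R^4)}$ with the same constant $1/\pi^2$. The only cosmetic difference is that you differentiate $r^3u^2$ and discard the extra nonnegative term $3\int_r^\infty s^2u(s)^2\,ds$, whereas the paper writes $u(r)^2=-2\int_r^\infty u(s)u'(s)\,ds$, inserts the weights $s^{3/2}$ by hand, and uses $s\ge r$ to pull out the factor $r^{-3}$.
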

\begin{proof}
 Let $u\in\mathcal{D}_{rad}(\R^4)$ and let us write for $r>0$,
$$u(r)^2=-2\Int_r^\infty u(s)u'(s)ds=-2\Int_r^\infty s^\frac{3}{2}u(s)s^\frac{3}{2}u'(s)\;\frac{ds}{s^3}.$$
According to Cauchy-Schwarz inequality, we obtain
\begin{eqnarray}
u(r)^2&\leq& \Big(\frac{2}{r^3} \Int_r^\infty s^3|u(s)|^2\;ds\Big)^\frac{1}{2}\Big(\frac{2}{r^3} \Int_r^\infty s^3|u'(s)|^2\;ds\Big)^\frac{1}{2}\nonumber\\
&\leq& \frac{1}{\pi^2 r^3}\|u\|_{L^2(\R^4)}\|\nabla u\|_{L^2(\R^4)},\nonumber
\end{eqnarray}
which leads to \eqref{or1} by density arguments.
\end{proof}

\noindent{\bf Acknowledgments.} {\it  We are very grateful  to Professor Hajer Bahouri and Professor Mohamed Majdoub for  interesting discussions and careful reading of the manuscript.}

\end{document}